\theoremstyle{definition}
\newtheorem{definition}{Definition}[section]
\newtheorem{notation}[definition]{Notation}
\theoremstyle{plain}
\newtheorem{theorem}[definition]{Theorem}
\newtheorem{lemma}[definition]{Lemma}
\newtheorem{proposition}[definition]{Proposition}
\newtheorem{remark}[definition]{Remark}
\newcommand{\beq}{\begin{equation}}
\newcommand{\eeq}{\end{equation}}
\begin{document} 
\title{Further Functorial Properties of the Reticulation}
\author{Claudia MURE\c{S}AN\\University of Bucharest\\Faculty of Mathematics and Computer Science\\Academiei 14, RO 010014, Bucharest, Romania\\c.muresan@yahoo.com}
\maketitle

\begin{center}
{\em Dedicated to the memory of Professor Lauren\c{t}iu Panaitopol}
\end{center}

\begin{abstract}
In this article we prove a set of preservation properties of the re\-ti\-cu\-la\-tion functor for residuated lattices (for instance preservation of subalgebras, finite direct products, inductive limits, Boolean po\-wers) and we transfer certain pro\-per\-ties between bounded distributive lattices and residuated lattices through the reticulation, focusing on Stone, strongly Stone and $m$-Stone algebras.\\ {\em 2000 Mathematics Subject Classification:} Primary 03G10, Secondary 06F35.\\ {\em Key words and phrases:} residuated lattice, reticulation, Stone algebras.

\end{abstract}

\section{Introduction}

\hspace*{11pt} In this paper we continue the study we began in \cite{eu1} and \cite{eu2} on the reticulation of a residuated lattice. All the definitions and properties from the previous two articles that we need in the sequel can be found in Section \ref{preliminaries}, together with other known results and a few simple new ones that will also be necessary in the following sections.

In \cite{eu2} we proved certain preservation properties of the reticulation functor for residuated lattices, ${\cal{L}}$. In Section \ref{preservation} we continue this study, proving that ${\cal{L}}$ preserves subalgebras and finite direct products, does not preserve quotients (but there exists a surjective morphism from the quotient of the reticulation to the reticulation of the quotient), preserves inductive limits and Boolean powers. Our goal is to use such preservation results for a transfer of properties between residuated lattices and bounded distributive lattices through the reticulation.

We begin the research of such properties that can be transferred through the reticulation in Section \ref{stonealgebras}, which is focused on Stone and strongly Stone lattices and residuated lattices. Here, we prove that a residuated lattice is Stone (respectively strongly Stone, respectively $m$-Stone) iff its reticulation is Stone (respectively strongly Stone, respectively $m$-Stone). We also show that the Boolean algebra of the co-annihilator filters of a residuated lattice and that of its reticulation are isomorphic. Through the reticulation, we transfer a known characterization of $m$-Stone bounded distributive lattices to $m$-Stone residuated lattices. We conclude by two remarks related to a characterization of Stone pseudocomplemented distributive lattices that is not valid for residuated lattices.

In future articles we will continue our research on the transfer of pro\-per\-ties between the category of bounded distributive lattices and that of residuated lattices through the reticulation functor. This transfer of pro\-per\-ties between different categories is the very purpose of the reticulation.

\section{Preliminaries}
\label{preliminaries}

\begin{definition}
Let $L$ be a distributive lattice with $0$. An element $l$ of $L$ is said to be {\em pseudocomplemented} iff there exists a maximal element $m$ of $L$ which satisfies: $l\wedge m=0$; such an element $m$ is denoted $l^{*}$ and called the {\em pseudocomplement of $l$}. $L$ is said to be {\em pseudocomplemented} iff all its elements are pseudocomplemented.
\end{definition}

\begin{definition}
Let $L$ be a lattice. A nonempty subset $F$ of $L$ is called a {\em filter of $L$} iff it satisfies the following conditions:

\noindent (i) for all $l,m\in F$, $l\wedge m\in F$;

\noindent (ii) for all $l\in F$ and all $m\in L$, if $l\leq m$ then $m\in F$.

The set of all filters of $L$ is denoted ${\cal{F}}(L)$.

\label{`filterl`}
\end{definition}

Let $L$ be a lattice and $F$ a filter of $L$. For all $l,m\in L$, we denote $l\equiv m({\rm mod}\ F)$ and say that {\em $l$ and $m$ are congruent modulo $F$} iff there exists an element $e\in F$ such that $l\wedge e=m\wedge e$. Obviously, $\equiv ({\rm mod}\ F)$ is a congruence relation on $L$. The quotient lattice with respect to the congruence relation $\equiv ({\rm mod}\ F)$ is denoted $L/F$ and its elements are denoted $l/F$, $l\in L$.

\begin{definition}
A {\em residuated lattice} is an algebraic structure $(A,\vee ,\wedge ,\odot ,\rightarrow ,0,1)$, with the first 4 operations binary and the last two constant, such that $(A,\vee ,\wedge ,0,1)$ is a bounded lattice, $(A,\odot ,1)$ is a commutative monoid and the following property, called {\em residuation}, is satisfied: for all $a,b,c\in A$, $a\leq b\rightarrow c\Leftrightarrow a\odot b\leq c$, where $\leq $ is the partial order of the lattice $(A,\vee ,\wedge ,0,1)$.
\label{`rl`}
\end{definition}

For any residuated lattice $A$ and any $a,b\in A$, we denote $a\leftrightarrow b=(a\rightarrow b)\wedge (b\rightarrow a)$ and $\neg \, a=a\rightarrow 0$.

\begin{definition}
Let $A$ be a residuated lattice. A nonempty subset $F$ of $A$ is called a {\em filter of $A$} iff it satisfies the following conditions:

\noindent (i) for all $a,b\in F$, $a\odot b\in F$;

\noindent (ii) for all $a\in F$ and all $b\in A$, if $a\leq b$ then $b\in F$.

The set of all filters of $A$ is denoted ${\cal{F}}(A)$.

\label{`filtera`}
\end{definition}

\begin{definition}
Let $A$ be a lattice (residuated lattice), $X\subseteq A$ and $a\in A$. The least filter  of $A$ that includes $X$ (that is: the intersection of all filters of $A$ that include $X$) is called {\em the filter of $A$ generated by $X$} and is denoted by $<X>$. The filter of $A$ generated by $\{a\}$ is denoted by $<a>$ and is called {\em the principal filter of $A$ generated by $a$}. For lattices, the notations mentioned above can be replaced by $[X)$ and respectively $[a)$.
\label{`filtrulgenerat`}
\end{definition}

\begin{lemma}{\rm \cite[Lemma 2.1]{eu1}}\\ Let $A$ be a residuated lattice and $a\in A$. Then $<a>=\{b\in A|(\exists \, n\in {\rm I\!N}^{*})\, a^{n}\leq b\}$.
\label{`principal`}
\end{lemma}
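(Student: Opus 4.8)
The plan is to prove the set equality by the standard two-part argument for generated filters: I will show that the set $S=\{b\in A\mid (\exists\, n\in {\rm I\!N}^{*})\, a^{n}\leq b\}$ is itself a filter of $A$ containing $a$, and then that $S$ is included in every filter of $A$ that contains $a$. By Definition \ref{`filtrulgenerat`}, these two facts together force $S=<a>$, since $<a>$ is by definition the least filter containing $a$.

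First I would verify that $S$ is a filter in the sense of Definition \ref{`filtera`}. Nonemptiness is immediate: taking $n=1$ gives $a^{1}=a\leq a$, so $a\in S$. For the upward-closure condition (ii), if $b\in S$ and $b\leq c$, then $a^{n}\leq b\leq c$ for some $n\in{\rm I\!N}^{*}$, so $c\in S$. For the $\odot $-closure condition (i), given $b,c\in S$ I would pick $m,n\in{\rm I\!N}^{*}$ with $a^{m}\leq b$ and $a^{n}\leq c$ and conclude $a^{m+n}=a^{m}\odot a^{n}\leq b\odot c$, so that $b\odot c\in S$.

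Next I would show minimality. Let $F$ be an arbitrary filter of $A$ with $a\in F$, and let $b\in S$, so that $a^{n}\leq b$ for some $n\in{\rm I\!N}^{*}$. A short induction on $n$, applying condition (i) of Definition \ref{`filtera`} repeatedly to the element $a\in F$, gives $a^{n}\in F$; then condition (ii) together with $a^{n}\leq b$ yields $b\in F$. Hence $S\subseteq F$, and since $S$ is a filter containing $a$ that is contained in every filter containing $a$, it is the smallest such filter, i.e. $S=<a>$.

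The only step that is not purely formal is the inequality $a^{m}\odot a^{n}\leq b\odot c$ used in verifying condition (i), which rests on the monotonicity of $\odot $ in each argument. I expect this to be the main point to justify, but it follows directly from the residuation property of Definition \ref{`rl`}: from the equivalence $a\odot b\leq c\Leftrightarrow a\leq b\rightarrow c$ one derives that $\odot $ preserves the order $\leq $, which is a standard property of residuated lattices and can simply be cited. Everything else in the argument is routine.
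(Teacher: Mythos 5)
Your proof is correct and follows the standard argument for this lemma; note that the present paper only cites the result from \cite{eu1} without reproducing a proof, so there is nothing to diverge from. The one step you single out, the monotonicity of $\odot $, is exactly Remark \ref{`calcul`}, (iii), already recorded in the paper, so you can cite it directly rather than re-deriving it from residuation.
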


\begin{notation}
Let $A$ be a lattice (residuated lattice). For all filters $F$, $G$ of $A$, we denote $<F\cup G>$ by $F\vee G$. More generally, for any family $\{F_{t}|t\in T\}$ of filters of $A$, we denote $<\displaystyle \bigcup _{t\in T}F_{t}>$ by $\displaystyle \bigvee _{t\in T}F_{t}$.
\label{`veefamfiltre`}
\end{notation}

\begin{proposition}{\rm \cite[Proposition 3.5]{eu1}}\\ Let $A$ be a bounded distributive lattice or a residuated lattice. Then $({\cal{F}}(A),$\linebreak $\vee ,\cap ,\{1\},A)$ is a bounded distributive lattice, whose order relation is $\subseteq $.
\end{proposition}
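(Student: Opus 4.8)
The plan is to first establish that $(\mathcal{F}(A),\subseteq)$ is a lattice with the claimed operations and bounds, and then to reduce the substance of the statement to a single computation, namely distributivity. For the lattice structure I would observe that an arbitrary intersection of filters is again a filter (an immediate verification against the defining conditions of a filter), so $\cap$ delivers the infimum of any two filters, while $F\vee G=\langle F\cup G\rangle$ is by its very definition the least filter containing both $F$ and $G$, hence their supremum; thus $\mathcal{F}(A)$ is a lattice (in fact a complete one, since arbitrary intersections and arbitrary generated joins both exist) ordered by $\subseteq$. The bounds are equally direct: every filter is nonempty and upward closed and $1$ is the top element, so every filter contains $1$; since $\{1\}$ is itself a filter it is the bottom element, and $A$ is trivially the top.

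The key preparatory step is an explicit description of the join. I would prove that
\[
F\vee G=\{x\in A\mid \exists\, f\in F,\ \exists\, g\in G,\ f\ast g\leq x\},
\]
where $\ast=\wedge$ in the bounded distributive lattice case and $\ast=\odot$ in the residuated lattice case. One checks that this set is a filter (using closure of $F$ and $G$ under $\ast$ together with the monotonicity of $\ast$), that it contains $F\cup G$ (taking the missing factor to be $1$, which lies in every filter), and that it is contained in every filter including $F\cup G$; hence it coincides with $\langle F\cup G\rangle$. Here the regrouping of a finite meet/product of elements of $F\cup G$ into one $F$-factor and one $G$-factor relies on the commutativity and associativity of $\ast$, and in the residuated case on Lemma \ref{`principal`}-style reasoning about generated filters.

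Distributivity is the main obstacle. One inclusion, $F\vee(G\cap H)\subseteq(F\vee G)\cap(F\vee H)$, holds in every lattice and requires no computation. For the reverse inclusion I would take $x\in(F\vee G)\cap(F\vee H)$, extract $f_1\in F,\,g\in G$ with $f_1\ast g\leq x$ and $f_2\in F,\,h\in H$ with $f_2\ast h\leq x$, and then set $f=f_1\ast f_2\in F$ and $k=g\vee h$. The decisive observation is that $k\in G\cap H$, because $g\leq k$ forces $k\in G$ and $h\leq k$ forces $k\in H$ by upward closure. It then remains to show $f\ast k\leq x$. Expanding by the distributivity of $\ast$ over $\vee$ --- which is the ordinary lattice distributivity of $A$ in the bounded distributive case, and the always valid law $a\odot(b\vee c)=(a\odot b)\vee(a\odot c)$ in the residuated case --- gives $f\ast k=(f\ast g)\vee(f\ast h)$, and since each $f_i\leq 1$, monotonicity yields $f\ast g=f_1\ast(f_2\ast g)\leq f_1\ast g\leq x$ and $f\ast h=f_2\ast(f_1\ast h)\leq f_2\ast h\leq x$, whence $f\ast k\leq x$ and $x\in F\vee(G\cap H)$. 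I expect the only delicate point to be the invocation, in the residuated case, of $a\odot(b\vee c)=(a\odot b)\vee(a\odot c)$ and $a\leq 1$, which are precisely what make the filter lattice distributive even when the underlying lattice of $A$ is not itself distributive.
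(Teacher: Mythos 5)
Your proposal is correct, and it follows the standard argument that the cited source (\cite[Proposition 3.5]{eu1}) uses: identify the join as $\{x\mid f\ast g\leq x$ for some $f\in F$, $g\in G\}$ and derive distributivity of ${\cal F}(A)$ from $a\odot(b\vee c)=(a\odot b)\vee(a\odot c)$ (Remark \ref{`calcul`} (i)) together with monotonicity of $\odot$ in the residuated case, and from ordinary distributivity in the lattice case. The present paper only quotes the result without reproving it, so there is nothing further to compare; your treatment of the nontrivial inclusion $(F\vee G)\cap(F\vee H)\subseteq F\vee(G\cap H)$ via $f=f_{1}\ast f_{2}$ and $k=g\vee h$ is exactly the decisive step.
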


\begin{proposition}{\rm \cite[Proposition 4.3]{eu1}}\\ Let $A$ be a residuated lattice. Then, for all $a,b\in A$, $<a>\cap <b>=<a\vee b>$.
\label{`sisau`}
\end{proposition}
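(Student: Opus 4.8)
The plan is to work entirely from the description of principal filters given by Lemma \ref{`principal`}, namely $<c>=\{x\in A\mid (\exists\, n\in{\rm I\!N}^{*})\ c^{n}\leq x\}$ for each $c\in A$, and to establish the two inclusions separately. Throughout I will use the standard consequences of residuation that $\odot$ is isotone in each argument, that $c\leq 1$ for every $c\in A$, and that $\odot$ distributes over $\vee$ (the last because, by residuation, the map $x\mapsto x\odot c$ is a left adjoint and hence preserves joins).

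The inclusion $<a\vee b>\subseteq <a>\cap <b>$ follows from the filter axioms alone. Since $a\leq a\vee b$ and $<a>$ is a filter containing $a$, condition (ii) of Definition \ref{`filtera`} forces $a\vee b\in <a>$; as $<a\vee b>$ is the least filter containing $a\vee b$, this gives $<a\vee b>\subseteq <a>$, and the symmetric argument gives $<a\vee b>\subseteq <b>$.

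For the reverse inclusion, let $x\in <a>\cap <b>$. Lemma \ref{`principal`} provides $m,n\in{\rm I\!N}^{*}$ with $a^{m}\leq x$ and $b^{n}\leq x$, and putting $N=\max(m,n)$ I first record that $a^{N}\leq a^{m}\leq x$ and $b^{N}\leq b^{n}\leq x$, using monotonicity together with $a^{N-m},b^{N-n}\leq 1$. The decisive step is then to bound $(a\vee b)^{2N}$: distributing $\odot$ over $\vee$ expands this into a finite join of monomials of the form $a^{i}\odot b^{j}$ with $i+j=2N$. By pigeonhole each such monomial satisfies $i\geq N$ or $j\geq N$; in the first case $a^{i}\odot b^{j}\leq a^{i}\leq a^{N}\leq x$, and in the second case $a^{i}\odot b^{j}\leq b^{j}\leq b^{N}\leq x$. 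Hence every monomial is $\leq x$, so $(a\vee b)^{2N}\leq x$, and Lemma \ref{`principal`} yields $x\in <a\vee b>$.

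I expect the main obstacle to be the monomial bound in the last step: one must argue the distributive expansion of the $2N$-fold product carefully and make the pigeonhole observation precise, while being sure that the auxiliary properties invoked — isotonicity of $\odot$, $c\leq 1$, and distributivity over $\vee$ — are genuine consequences of the residuation axiom rather than extra assumptions. By contrast, the two easy inclusion directions and the passage from $m,n$ to $N$ are routine once these facts are available.
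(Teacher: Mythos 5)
Your proof is correct, and since the paper only quotes this result from \cite{eu1} without reproducing its proof, there is nothing to contrast it with here; your argument is the standard one for this fact. Both inclusions check out: the easy direction uses only upward closure of filters, and the key bound $(a\vee b)^{2N}\leq x$ is legitimately obtained from Lemma \ref{`principal`}, the distributivity of $\odot$ over $\vee$ (Remark \ref{`calcul`}, (i)), commutativity and isotonicity of $\odot$, and $b\leq 1$, with the pigeonhole step on the exponents handled correctly.
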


Let $A$ be a residuated lattice and $F$ a filter of $A$. For all $a,b\in A$, we denote $a\equiv b({\rm mod}\ F)$ and say that {\em $a$ and $b$ are congruent modulo $F$} iff $a\leftrightarrow b\in F$. Obviously, $\equiv ({\rm mod}\ F)$ is a congruence relation on $A$. The quotient residuated lattice with respect to the congruence relation $\equiv ({\rm mod}\ F)$ is denoted $A/F$ and its elements are denoted $a/F$, $a\in A$.

\begin{remark}{\rm \cite{pic}}\\ Let $A$ be a residuated lattice and $a,b,c,d\in A$. Then:

\noindent (i) $a\odot (b\vee c)=(a\odot b)\vee (a\odot c)$;

\noindent (ii) if $a\vee b=1$, then $a\odot b=a\wedge b$;

\noindent (iii) if $a\leq b$ and $c\leq d$, then $a\odot c\leq b\odot d$; 

\noindent (iv) $a\leq b$ iff $a\rightarrow b=1$.
\label{`calcul`}
\end{remark}

If $A$ is a residuated lattice, we call the {\em Boolean center of $A$} the set of the complemented elements of $A$, which we denote by $B(A)$. It is known that this subset of $A$ is a Boolean algebra with the operations induced by those of $A$.

In \cite{eu1} we gave the following definition of the reticulation of a residuated lattice.

\begin{definition}{\rm \cite{eu1}}\\ Let $A$ be a residuated lattice. A {\em reticulation of $A$} is a pair $(L,\lambda )$, where $L$ is a bounded distributive lattice and $\lambda :A\rightarrow L$ is a function that satisfies conditions 1)-5) below:
 
\noindent 1) for all $a,b\in A$, $\lambda (a\odot b)=\lambda (a)\wedge \lambda (b)$;

\noindent 2) for all $a,b\in A$, $\lambda (a\vee b)=\lambda (a)\vee \lambda (b)$;

\noindent 3) $\lambda (0)=0$; $\lambda (1)=1$;

\noindent 4) $\lambda $ is surjective;

\noindent 5) for all $a,b\in A$, $\lambda (a)\leq \lambda (b)$ iff $(\exists \, n\in {\rm I\! N}^{*})\, a^{n}\leq b$.
\label{reticulatia}
\end{definition}

\begin{lemma}{\rm \cite[Lemma 3.1]{eu1}}\\ If $A$ is a residuated lattice and $L$ is a bounded distributive lattice, then a function $\lambda :A\rightarrow L$ that verifies conditions 1)-3) also satisfies:

\noindent a) $\lambda $ is order-preserving;

\noindent b) for all $a,b\in A$, $\lambda (a\wedge b)=\lambda (a)\wedge \lambda (b)$;

\noindent c) for all $a\in A$ and all $n\in {\rm I\! N}^{*}$, $\lambda (a^{n})=\lambda (a)$.
\label{`abc`}
\end{lemma}

We shall use the notations of the conditions 1)-5) and of the properties a)-c) in the following sections also.

Until mentioned otherwise, let $A$ be a residuated lattice and $(L,\lambda )$ a reticulation of $A$.

\begin{lemma}{\rm \cite[Lemma 3.4]{eu1}}\\ For any filter $F$ of $A$ and any $a\in A$, we have: $\lambda (a)\in \lambda (F)$ iff $a\in F$.
\label{`apartine2`}
\end{lemma}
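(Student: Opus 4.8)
The plan is to prove the two implications of the biconditional separately, with the forward (left-to-right) direction being the only one that requires argument. The reverse direction is immediate: if $a\in F$, then $\lambda (a)$ belongs to the image $\lambda (F)$ by the very definition of that set, so there is nothing to do.

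For the substantial direction, I would begin from the hypothesis $\lambda (a)\in \lambda (F)$, which unwinds to the existence of some $b\in F$ with $\lambda (b)=\lambda (a)$. The key move is that equality of images, on its own, does not recover $a$; one must convert it into order-theoretic information. So from $\lambda (b)=\lambda (a)$ I extract in particular $\lambda (b)\leq \lambda (a)$ and apply condition 5) of Definition \ref{reticulatia}, which yields an exponent $n\in {\rm I\! N}^{*}$ such that $b^{n}\leq a$.

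The final step transports $a$ into $F$ using only the filter axioms of Definition \ref{`filtera`}. Since $b\in F$ and $F$ is closed under $\odot $ (axiom (i)), a routine induction on $n$ gives $b^{n}\in F$; then from $b^{n}\leq a$ together with the upward closure of $F$ (axiom (ii)) I conclude $a\in F$, as desired.

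I do not expect any serious obstacle here: the entire argument hinges on recognizing that surjectivity of $\lambda $ and bare equality of images are insufficient, and that one must pass through condition 5) to obtain the power inequality $b^{n}\leq a$, after which the residuated-lattice filter axioms finish the proof. It is worth noting that neither the distributivity of $L$ nor the homomorphism properties a)--c) of Lemma \ref{`abc`} are needed for this lemma; only condition 5) and the two filter axioms are used.
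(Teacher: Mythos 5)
Your proof is correct: the reverse direction is indeed trivial, and the forward direction correctly converts $\lambda(b)=\lambda(a)$ for some $b\in F$ into $b^{n}\leq a$ via condition 5), then uses closure of $F$ under $\odot$ and upward closure to conclude $a\in F$. The paper does not reproduce a proof here (it cites Lemma 3.4 of \cite{eu1}), but your argument is the natural one and uses exactly the right ingredients.
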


\begin{lemma}
For any filters $F$, $G$ of $A$, we have: $\lambda (F)=\lambda (G)$ iff $F=G$.
\label{lambdaegal}
\end{lemma}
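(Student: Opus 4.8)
The plan is to reduce everything to the membership criterion provided by Lemma \ref{`apartine2`}, which states that for a filter $F$ of $A$ and an element $a\in A$, one has $\lambda (a)\in \lambda (F)$ exactly when $a\in F$. The forward implication is trivial: if $F=G$ then their images under $\lambda $ coincide, so the whole content of the statement lies in the converse.

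For the converse I would argue by double inclusion. Assume $\lambda (F)=\lambda (G)$ and fix an arbitrary $a\in F$. Then $\lambda (a)\in \lambda (F)$, and since $\lambda (F)=\lambda (G)$ this gives $\lambda (a)\in \lambda (G)$. Applying Lemma \ref{`apartine2`} to the filter $G$ and the element $a$, I conclude $a\in G$. Since $a\in F$ was arbitrary, this shows $F\subseteq G$, and the symmetric argument (interchanging the roles of $F$ and $G$) yields $G\subseteq F$, hence $F=G$.

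The point to be careful about, and the reason the statement is not completely immediate, is that $\lambda $ need not be injective, so one cannot simply invert the equality $\lambda (F)=\lambda (G)$ pointwise to recover $F=G$. The coincidence of the image sets only says that every $\lambda $-value attained on $F$ is also attained on $G$ and conversely; it is precisely Lemma \ref{`apartine2`} that upgrades this set-level coincidence of images into an element-level coincidence of the two filters. Consequently there is no genuine obstacle once that lemma is invoked: the proof is essentially a one-step application of it combined with the symmetry between $F$ and $G$.
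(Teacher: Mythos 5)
Your proof is correct and follows exactly the paper's own argument, which is stated there in one line as ``By double inclusion, using Lemma \ref{`apartine2`}''; you have simply spelled out the details. Nothing further is needed.
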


\begin{proof}
By double inclusion, using Lemma \ref{`apartine2`}.\end{proof}

\begin{remark}
For all $a\in A$, $\lambda (<a>)=<\lambda (a)>$.
\label{lambdagen}
\end{remark}

\begin{proof}
This is an obvious consequence of Lemmas \ref{`principal`} and \ref{lambdaegal} and conditions 4) and 5).\end{proof}

Lemma \ref{lambdaegal} could have been obtained as a corollary of the following proposition.

\begin{proposition}{\rm \cite[Proposition 3.6]{eu1}}\\ The function ${\cal{F}}(A)\rightarrow {\cal{F}}(L)$, $F\rightarrow \lambda (F)$, is well defined and it is a bounded lattice isomorphism.
\label{`izomf`}
\end{proposition}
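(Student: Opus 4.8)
The plan is to verify, in turn, three things: that $F\mapsto \lambda (F)$ sends filters of $A$ to filters of $L$ (well-definedness), that it is a bijection, and that both it and its inverse are order-preserving, which will make it a bounded lattice isomorphism.

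First I would check well-definedness, i.e.\ that $\lambda (F)\in {\cal{F}}(L)$ whenever $F\in {\cal{F}}(A)$. Nonemptiness is immediate since $F\neq \emptyset$. For closure under $\wedge$, given $\lambda (a),\lambda (b)\in \lambda (F)$ with $a,b\in F$, condition 1) gives $\lambda (a)\wedge \lambda (b)=\lambda (a\odot b)$, and $a\odot b\in F$ because $F$ is a filter of $A$; hence $\lambda (a)\wedge \lambda (b)\in \lambda (F)$. The upward closure is the delicate point: given $\lambda (a)\in \lambda (F)$ with $a\in F$ and $m\in L$ with $\lambda (a)\leq m$, surjectivity (condition 4)) lets me write $m=\lambda (b)$, and then condition 5) converts $\lambda (a)\leq \lambda (b)$ into $a^{n}\leq b$ for some $n\in {\rm I\!N}^{*}$. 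Since $F$ is closed under $\odot$ we have $a^{n}\in F$, and upward closure of $F$ in $A$ then yields $b\in F$, so $m=\lambda (b)\in \lambda (F)$. This confirms $\lambda (F)$ is a filter of $L$.

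Next I would establish the bijection. Injectivity is exactly Lemma \ref{lambdaegal}. For surjectivity, given $G\in {\cal{F}}(L)$, I would take $F:=\lambda ^{-1}(G)=\{a\in A\mid \lambda (a)\in G\}$ and check it is a filter of $A$: it contains $1$ since $\lambda (1)=1\in G$ by condition 3); it is closed under $\odot$, because $\lambda (a),\lambda (b)\in G$ forces $\lambda (a\odot b)=\lambda (a)\wedge \lambda (b)\in G$; and it is upward closed, because $a\in F$ and $a\leq b$ give $\lambda (a)\leq \lambda (b)$ by property a) of Lemma \ref{`abc`}, whence $\lambda (b)\in G$. Finally $\lambda (F)=\lambda (\lambda ^{-1}(G))=G$, using once more that $\lambda $ is surjective.

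It remains to see that the bijection and its inverse are order-preserving, which makes it a lattice isomorphism and thereby forces preservation of $\vee $ and $\cap $. Monotonicity of $F\mapsto \lambda (F)$ is immediate. For the converse, if $\lambda (F)\subseteq \lambda (G)$ and $a\in F$, then $\lambda (a)\in \lambda (F)\subseteq \lambda (G)$, so Lemma \ref{`apartine2`} gives $a\in G$; thus $F\subseteq G$. Being a bijective order isomorphism between the bounded distributive lattices ${\cal{F}}(A)$ and ${\cal{F}}(L)$, the map automatically preserves meets ($\cap $) and joins ($\vee $); and it sends the least filter $\{1\}$ to $\{\lambda (1)\}=\{1\}$ and the greatest filter $A$ to $\lambda (A)=L$, so it respects the bounds. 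The main obstacle is the upward-closure step in the well-definedness part, since it is the only place that genuinely requires surjectivity and the order-reflecting condition 5) together with the closure of $F$ under $\odot $-powers; the remaining steps are routine once Lemmas \ref{`apartine2`} and \ref{lambdaegal} are in hand.
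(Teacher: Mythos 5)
Your proof is correct. Note that this paper does not actually prove the proposition --- it is quoted verbatim from Proposition 3.6 of the earlier article \cite{eu1} --- so there is no in-text argument to compare yours against; but the route you take (well-definedness via conditions 1), 4), 5); injectivity from Lemma \ref{lambdaegal}; surjectivity via $\lambda ^{-1}(G)$; and the observation that a bijection which is order-preserving in both directions between these two lattices is automatically a bounded lattice isomorphism) is the natural one, and it respects the logical order of the preliminaries: Lemmas \ref{`apartine2`} and \ref{lambdaegal} are established independently of this proposition (the paper only remarks that \ref{lambdaegal} \emph{could alternatively} be derived from it), so your use of them introduces no circularity.
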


\begin{lemma}{\rm \cite[Lemma 3.3]{eu1}}\\ For all $a,b\in A$, $\lambda (a)=\lambda (b)$ iff $<a>=<b>$.
\label{`egalfiltre`}
\end{lemma}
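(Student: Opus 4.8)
The plan is to split the equality $\lambda(a)=\lambda(b)$ into the two inequalities $\lambda(a)\leq \lambda(b)$ and $\lambda(b)\leq \lambda(a)$, translate each through condition 5), and recognize the outcome as a pair of inclusions between the principal filters $<a>$ and $<b>$. First I would note that, since $L$ is a lattice, $\lambda(a)=\lambda(b)$ holds exactly when both $\lambda(a)\leq \lambda(b)$ and $\lambda(b)\leq \lambda(a)$ hold.

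Next I would feed each inequality into condition 5). By 5), $\lambda(a)\leq \lambda(b)$ is equivalent to the existence of some $n\in {\rm I\! N}^{*}$ with $a^{n}\leq b$, and by Lemma \ref{`principal`} this says precisely that $b\in <a>$; symmetrically, $\lambda(b)\leq \lambda(a)$ is equivalent to $a\in <b>$. At this stage the claim has become: $b\in <a>$ and $a\in <b>$ if and only if $<a>=<b>$.

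The only step requiring more than direct substitution is the passage from membership to inclusion of principal filters, namely that for any $x,y\in A$ one has $y\in <x>$ iff $<y>\subseteq <x>$. The forward implication holds because $<x>$ is a filter containing $y$ while $<y>$ is the least such filter; the converse is immediate from $y\in <y>$. Applying this to both pairs turns ``$b\in <a>$ and $a\in <b>$'' into ``$<b>\subseteq <a>$ and $<a>\subseteq <b>$'', i.e.\ $<a>=<b>$. I do not expect a real obstacle: the argument is a routine chaining of condition 5), Lemma \ref{`principal`}, and the minimality property defining $<\cdot>$, the only care being to keep the two inequalities paired with the correct inclusions so that the final double inclusion closes up. As an alternative I could bypass condition 5) by combining Remark \ref{lambdagen}, which gives $\lambda(<a>)=<\lambda(a)>$ and $\lambda(<b>)=<\lambda(b)>$, with Lemma \ref{lambdaegal}: in the distributive lattice $L$ one has $<\lambda(a)>=<\lambda(b)>$ iff $\lambda(a)=\lambda(b)$, so $\lambda(a)=\lambda(b)$ iff $\lambda(<a>)=\lambda(<b>)$ iff $<a>=<b>$.
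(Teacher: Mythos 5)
Your proposal is correct. The main argument you give is the direct, elementary one: split $\lambda(a)=\lambda(b)$ into two inequalities, translate each via condition 5) into $\exists n\, (a^{n}\leq b)$ and $\exists n\, (b^{n}\leq a)$, identify these with $b\in <a>$ and $a\in <b>$ via Lemma \ref{`principal`}, and close up using the minimality property of generated filters. Every step checks out, including the small lemma that $y\in <x>$ iff $<y>\subseteq <x>$. The paper itself does not reprove this statement (it is quoted from \cite{eu1}); instead it remarks immediately afterwards that the lemma ``could easily have been obtained as a consequence of Lemma \ref{lambdaegal} and Remark \ref{lambdagen}'' --- which is exactly the alternative route you sketch at the end (using that in the lattice $L$ a principal filter determines its generator as its minimum, so $<\lambda(a)>=<\lambda(b)>$ iff $\lambda(a)=\lambda(b)$). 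The trade-off is mild: your primary argument relies only on condition 5) and the explicit description of principal filters, so it is self-contained and does not pass through the filter-level machinery; the paper's suggested derivation is shorter once Lemma \ref{lambdaegal} and Remark \ref{lambdagen} are in hand, and it situates the statement as a corollary of the isomorphism between the filter lattices. Neither route is circular within this paper's ordering of results, since Remark \ref{lambdagen} and Lemma \ref{lambdaegal} are established independently of this lemma.
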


Notice that Lemma \ref{`egalfiltre`} could easily have been obtained as a consequence of Lemma \ref{lambdaegal} and Remark \ref{lambdagen}.


The following theorem states the existence and uniqueness of the re\-ti\-cu\-la\-tion for any residuated lattice.

\begin{theorem}{\rm \cite{eu1}}\\ Let $A$ be a residuated lattice. Then there exists a reticulation of $A$. Let $(L_{1},\lambda _{1})$, $(L_{2},\lambda _{2})$ be two reticulations of $A$. Then there exists an isomorphism of bounded lattices $f:L_{1}\rightarrow L_{2}$ such that $f\circ \lambda _{1}=\lambda _{2}$.
\label{`unicitatea`}
\end{theorem}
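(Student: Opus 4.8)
The plan is to treat existence and uniqueness separately, building a concrete reticulation for the first part and exploiting condition~5) as an intrinsic description of the order for the second.

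For existence, I would construct a reticulation out of the principal filters of $A$. First I would record the two identities that turn the principal filters into a sublattice of ${\cal F}(A)$: for all $a,b\in A$ one has $<a>\cap <b>=<a\vee b>$ by Proposition~\ref{`sisau`}, and $<a>\vee <b>=<a\odot b>$. The latter holds because $a\odot b\le a,b$ forces $<a>,<b>\subseteq <a\odot b>$ by upward closure, while conversely $a\odot b$ lies in any filter containing both $a$ and $b$. Since ${\cal F}(A)$ is a bounded distributive lattice under $\vee$ and $\cap$, these identities exhibit $PF(A):=\{<a>\mid a\in A\}$ as a bounded distributive sublattice, with bottom $<1>=\{1\}$ and top $<0>=A$. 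I would then let $L$ be the order dual of $PF(A)$ (so the meets and joins of ${\cal F}(A)$ are interchanged, and the two bounds are swapped), which is again a bounded distributive lattice, and define $\lambda:A\to L$ by $\lambda(a)=<a>$.

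Next I would verify conditions 1)--5) for the pair $(L,\lambda)$. In $L$ the meet is the filter join, so condition 1) reads $\lambda(a\odot b)=<a\odot b>=<a>\vee <b>=\lambda(a)\wedge\lambda(b)$, and dually condition 2) reads $\lambda(a\vee b)=<a\vee b>=<a>\cap <b>=\lambda(a)\vee\lambda(b)$. Condition 3) holds because, under the reversed order, the bottom of $L$ is $<0>=\lambda(0)$ and the top is $<1>=\lambda(1)$; condition 4) is immediate, since every element of $L$ is some $<a>=\lambda(a)$. The decisive point is condition 5): $\lambda(a)\le\lambda(b)$ in $L$ means $<b>\subseteq <a>$, i.e. $b\in <a>$, which by Lemma~\ref{`principal`} is exactly the existence of an $n\in{\rm I\!N}^{*}$ with $a^{n}\le b$.

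For uniqueness, the idea is that condition 5) determines each $\lambda_i$ intrinsically from $A$. Given reticulations $(L_1,\lambda_1)$ and $(L_2,\lambda_2)$, I would try to define $f:L_1\to L_2$ by $f(\lambda_1(a))=\lambda_2(a)$; this is legitimate to attempt because $\lambda_1$ is surjective (condition 4)). Well-definedness and injectivity come together: $\lambda_1(a)=\lambda_1(b)$ iff $\lambda_1(a)\le\lambda_1(b)$ and $\lambda_1(b)\le\lambda_1(a)$, which by condition 5) for $\lambda_1$ is the purely $A$-internal statement that $a^{m}\le b$ and $b^{n}\le a$ for some $m,n\in{\rm I\!N}^{*}$; feeding the same statement into condition 5) for $\lambda_2$ yields $\lambda_2(a)=\lambda_2(b)$, and the converse implication is symmetric. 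That $f$ is a bounded lattice morphism follows termwise from conditions 1)--3) for the two maps, surjectivity of $f$ from condition 4) for $\lambda_2$, and $f\circ\lambda_1=\lambda_2$ holds by construction, so $f$ is the desired isomorphism. The main obstacle I expect lies in the existence half, specifically in keeping the order-reversal bookkeeping consistent: the reticulation's meet corresponds to $\odot$ and hence to the \emph{join} of principal filters, while its join corresponds to $\vee$ and hence to the \emph{intersection} of filters, so the inclusion order must be dualized throughout in order for conditions 1)--3) and 5) to line up; uniqueness, by contrast, is essentially formal once condition 5) is recognised as an $A$-intrinsic description of the order.
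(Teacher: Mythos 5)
Your proposal is correct and follows essentially the same route as the paper: the existence half reproduces the second construction from \cite{eu1} that the paper recalls in Section \ref{preliminaries}, namely the reticulation $(({\cal{PF}}(A),\cap ,\vee ,A,\{1\}),\lambda )$ with $\lambda (a)=<a>$ and the inclusion order dualized, and the uniqueness half is the standard argument reading condition 5) as an $A$-intrinsic description of the order. Your order-reversal bookkeeping and the identities $<a>\cap <b>=<a\vee b>$, $<a>\vee <b>=<a\odot b>$ are exactly what make that construction work.
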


We denote by ${\cal{RL}}$ the category of residuated lattices and by ${\cal{D}}01$ the category of bounded distributive lattices.

In \cite{eu1} and \cite{eu2}, we defined {\em the reticulation functor} ${\cal{L}}:{\cal{RL}}\rightarrow {\cal{D}}01$. If $A$ is a residuated lattice and $(L(A),\lambda _{A})$ is its reticulation, then ${\cal{L}}(A)=L(A)$. If $B$ is another residuated lattice, $(L(B),\lambda _{B})$ is its reticulation and $f:A\rightarrow B$ is a morphism of residuated lattices, then ${\cal{L}}(f):{\cal{L}}(A)=L(A)\rightarrow {\cal{L}}(B)=L(B)$, for all $a\in A$, ${\cal{L}}(f)(\lambda _{A}(a))=\lambda _{B}(f(a))$.

In \cite{eu1} we constructed the reticulation of a residuated lattice in two different ways. Here is the second construction of the reticulation that we showed in that article. Let $A$ be a residuated lattice and let us denote by ${\cal{PF}}(A)$ the set of principal filters of $A$. Also, we denote by $\lambda :A\rightarrow {\cal{PF}}(A)$ the function given by: for all $a\in A$, $\lambda (a)=<a>$. Then, according to Theorem 4.2 in \cite{eu1}, $(({\cal{PF}}(A),\cap ,\vee ,A,\{1\}),\lambda )$ is a reticulation of $A$. The definition of functor ${\cal{L}}$ using this construction is obvious: ${\cal{L}}(A)={\cal{PF}}(A)$ and, if $f:A\rightarrow B$ is a morphism of residuated lattices, then, for all $a\in A$, ${\cal{L}}(f)(<a>)=<f(a)>$.

For the definitions related to the inductive limit, that we present below, we are using the terminology of \cite{bus}.

A partially ordered set $(I,\leq )$ is called a {\em directed set} iff, for any $i,j\in I$, there exists an element $k\in I$ such that $i\leq k$ and $j\leq k$.

\begin{definition}
Let $(I,\leq )$ be a directed set and $\cal{C}$ a category. We call {\em inductive system} of objects in $\cal{C}$ with respect to the directed index set $I$ a pair $((A_{i})_{i\in I},(\phi _{ij})_{\stackrel{\scriptstyle i,j\in I}{\scriptstyle i\leq j}})$ with $(A_{i})_{i\in I}$ a family of objects of $\cal{C}$ and, for all $i,j\in I$ with $i\leq j$, $\phi _{ij}:A_{i}\rightarrow A_{j}$ a morphism in ${\cal{C}}$, such that:

\noindent (i) for every $i\in I$, $\phi _{i\, i}=1_{A_{i}}$;

\noindent (ii) for any $i,j,k\in I$ with $i\leq j\leq k$, $\phi _{jk}\circ \phi _{ij}=\phi _{ik}$.

If there is no danger of confusion, an inductive system like above will be denoted $(A_{i},\phi _{ij})$.
\end{definition}

\begin{definition}
Let $(A_{i},\phi _{ij})$ be an inductive system of objects in a ca\-te\-go\-ry ${\cal{C}}$ relative to a directed index set $I$. A pair $(A,(\phi _{i})_{i\in I})$, with $A$ an object in ${\cal{C}}$ and, for all $i\in I$, $\phi _{i}:A_{i}\rightarrow A$ a morphism in ${\cal{C}}$, is called {\em inductive limit} of the inductive system $(A_{i},\phi _{ij})$ iff:

\noindent (i) for every $i,j\in I$ with $i\leq j$, $\phi _{j}\circ \phi _{ij}=\phi _{i}$;

\begin{center}
\begin{picture}(60,60)(0,0)
\put(7,37){$A_{i}$}
\put(20,40){\vector(1,0){20}}
\put(42,37){$A_{j}$}
\put(22,44){$\phi _{ij}$}
\put(45,35){\vector(0,-1){20}}
\put(47,22){$\phi _{j}$}
\put(42,5){$A$}
\put(18,35){\vector(1,-1){22}}
\put(16,22){$\phi _{i}$}

\end{picture}
\end{center}

\noindent (ii) for any object $B$ of ${\cal{C}}$ and any family $(f_{i})_{i\in I}$ of morphisms in ${\cal{C}}$ such that, for all $i\in I$, $f_{i}:A_{i}\rightarrow B$ and, for all $i,j\in I$ with $i\leq j$, $f_{j}\circ \phi _{ij}=f_{i}$, there is a unique morphism $f:A\rightarrow B$ in ${\cal{C}}$ such that, for every $i\in I$, $f\circ \phi _{i}=f_{i}$.

\begin{center}
\begin{picture}(60,60)(0,0)
\put(7,37){$A_{i}$}
\put(20,40){\vector(1,0){20}}
\put(42,37){$A$}
\put(22,44){$\phi _{i}$}

\put(45,35){\vector(0,-1){20}}
\put(47,22){$f$}
\put(42,5){$B$}
\put(18,35){\vector(1,-1){22}}
\put(16,22){$f_{i}$}
\end{picture}
\end{center}
\end{definition}

It is immediate that the inductive limit of a given inductive system is unique up to an isomorphism, that is, if $(A,(\phi _{i})_{i\in I})$ and $(B,(\psi _{i})_{i\in I})$ are two inductive limits of the same inductive system, then there exists a unique isomorphism $f:A\rightarrow B$ such that, for every $i\in I$, $f\circ \phi _{i}=\psi _{i}$.

We say that a category ${\cal{C}}$ is a {\em category with inductive limits} iff every inductive system in ${\cal{C}}$ has an inductive limit. The category of sets, the category of residuated lattices and the category of bounded distributive lattices are categories with inductive limits.

In the following, we shall present a construction for the inductive limit in the category of residuated lattices. As we believe that this construction is known, we shall not give any proofs here. See also \cite{bus}.

Let $(A_{i},\phi _{ij})$ be an inductive system in ${\cal{RL}}$. We denote by $\displaystyle \coprod _{i\in I}A_{i}$ the disjoint union of the family $(A_{i})_{i\in I}$. Let us consider the following relation on $\displaystyle \coprod _{i\in I}A_{i}$: for all $i,j\in I$, all $a\in A_{i}$ and all $b\in A_{j}$, $a\sim b$ iff there exists $k\in I$ such that $i\leq k$, $j\leq k$ and $\phi _{ik}(a)=\phi _{jk}(b)$. It is immediate that $\sim $ is an equivalence relation on $\displaystyle \coprod _{i\in I}A_{i}$. We denote by $A$ the quotient set $\left( \displaystyle \coprod _{i\in I}A_{i}\right) /\sim$ and by $[a]$ the equivalence class of an element $a\in \displaystyle \coprod _{i\in I}A_{i}$. For any $i\in I$, let $\phi _{i}:A_{i}\rightarrow A$, for all $a\in A_{i}$, $\phi _{i}(a)=[a]$.

Let us define residuated lattice operations on $A$. We define $0=[0]$ and $1=[1]$. Obviously, this definition does not depend on the residuated lattice $A_{i}$ the 0 and the 1 are taken from. Let $[a],[b]\in A$. Let $i,j\in I$ such that $a\in A_{i}$ and $b\in A_{j}$. Then, by the definition of the directed set, there exists $k\in I$ such that $i\leq k$ and $j\leq k$. We define $[a]\vee [b]=[\phi _{ik}(a)\vee \phi _{jk}(b)]$ and $[a]\wedge [b]=[\phi _{ik}(a)\wedge \phi _{jk}(b)]$. The same for $\odot $ and $\rightarrow $. Here is the definition of the partial order relation: for all $a,b\in A$ with $a\in A_{i}$ and $b\in A_{j}$ for some $i,j\in I$, we define: $[a]\leq [b]$ iff there exists $k\in I$ such that $i\leq k$, $j\leq k$ and $\phi _{ik}(a)\leq \phi _{jk}(b)$.

Then $(A,(\phi _{i})_{i\in I})$ is an inductive limit of the inductive system $(A_{i},\phi _{ij})$ in the category $\cal{RL}$.

A similar construction can be done for inductive limits in the category ${\cal{D}}01$.

In the following, let $(A,F)$ be a universal algebra (we will use the definitions and notations from \cite{bur} here) and $B$ a Boolean algebra. We denote by $A[B]$ the set of the functions $X:A\rightarrow B$ which verify: $X(A)$ is finite, $\displaystyle \bigvee _{a\in A}X(a)=1$ and, for all $a,b\in A$, if $a\neq b$ then $X(a)\wedge X(b)=0$. $A[B]$ is an algebra of the same type as $A$, with the operations defined this way: if $f$ is a $n$-ary operation in $F$ and $X_{1},\ldots ,X_{n}\in A[B]$, then $f(X_{1},\ldots ,X_{n})\in A[B]$, for all $a\in A$, $f(X_{1},\ldots ,X_{n})(a)=\bigvee \{X_{1}(a_{1})\wedge \ldots \wedge X_{n}(a_{n})|a_{1},\ldots ,a_{n}\in A,f(a_{1},\ldots ,a_{n})=a\}$. We call $A[B]$, with these operations, a {\em Boolean power of $A$}.

Now let $P(B)$ be the set of the finite partitions of $B$, that is $\displaystyle P(B)=\{\{x_{1},\ldots ,x_{n}\}|n\in {\rm I\! N}^{*},x_{1},\ldots ,x_{n}\in B\setminus \{0\},\bigvee _{i=1}^{n}x_{i}=1, (\forall i,j\in \overline{1,n})i\neq j\Rightarrow x_{i}\wedge x_{j}=0\}$. We define the partial order $\leq $ on $P(B)$ by: for all $p,q\in P(B)$, $p\leq q$ iff $q$ is a refinement of $p$, that is: $p=\{x_{1},\ldots ,x_{n}\}$ and $q=\{y_{ij}|i\in \overline{1,n},(\forall i\in \overline{1,n})j\in \overline{1,k_{i}}\}$, where $n,k_{1},\ldots ,k_{n}\in {\rm I\! N}^{*}$ and, for all $i\in \overline{1,n}$, $\displaystyle \bigvee _{j=1}^{k_{i}}y_{ij}=x_{i}$. For all $p,q\in P(B)$ with $p\leq q$, we define $k_{pq}:q\rightarrow p$, for all $a\in q$ and $b\in p$, $k_{pq}(a)=b$ iff $a\leq b$ (with the notations above for the elements of $p$ and those of $q$, for all $i\in \overline{1,n}$ and all $j\in \overline{1,k_{i}}$, $k_{pq}(y_{ij})=x_{i}$). The fact that the functions $k_{pq}$ are well defined is obvious (if, for an $a\in q$, there exist $b_{1},b_{2}\in p$, $b_{1}\neq b_{2}$ and $a\leq b_{1}$, $a\leq b_{2}$, then $a\leq b_{1}\wedge b_{2}=0$, so $a=0$, which is a contradiction to the definition of $P(B)$).

For every $p\in P(B)$, we define $A^{p}=\{X|X:p\rightarrow A\}$, organized as a universal algebra of the type of $A$ like this: if $f$ is a $n$-ary operation in $F$ and $X_{1},\ldots ,X_{n}\in A^{p}$, then $f(X_{1},\ldots ,X_{n})\in A^{p}$, for all $a\in A$, $f(X_{1},\ldots ,X_{n})(a)=\bigvee \{X_{1}(a_{1})\wedge \ldots \wedge X_{n}(a_{n})|a_{1},\ldots ,a_{n}\in A,f(a_{1},\ldots ,a_{n})=a\}$. For all $p,q\in P(B)$ such that $p\leq q$, $k_{pq}$ induces a morphism of universal algebras of the type of $A$, $f_{pq}:A^{p}\rightarrow A^{q}$, for all $X\in A^{p}$ and $a\in q$, $f_{pq}(X)(a)=X(k_{pq}(a))$. It is easily seen that $((A^{p})_{p\in P(B)},(f_{pq})_{\stackrel{\scriptstyle p,q\in P(B)}{\scriptstyle p\leq q}})$ is an inductive system. We shall denote by $\lim _{B}(A)$ its inductive limit.

\begin{theorem}{\rm \cite[Theorem 3]{ash}}\\ With the notations above, $\lim _{B}(A)$ exists and it is isomorphic to $A[B]$ as universal algebras of type $F$.
\label{tash}
\end{theorem}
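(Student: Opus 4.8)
The plan is to realise $A[B]$ explicitly as the inductive limit of the system $((A^{p})_{p\in P(B)},(f_{pq}))$ by exhibiting a compatible cocone $(\Phi _{p})_{p\in P(B)}$ into $A[B]$ and checking that it is universal; this establishes simultaneously that $\lim _{B}(A)$ exists and that it is isomorphic to $A[B]$. (Existence alone could be taken for granted, since directed limits of algebras of a fixed type $F$ always exist, being computed on underlying sets with the induced operations, entirely parallel to the construction recalled above for ${\cal{RL}}$.) Throughout I read each $A^{p}$ as the $|p|$-th direct power of $A$ with componentwise operations, i.e.\ for a block $c\in p$ and an $n$-ary $f\in F$, $f(X_{1},\ldots ,X_{n})(c)=f(X_{1}(c),\ldots ,X_{n}(c))$.

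For each partition $p=\{x_{1},\ldots ,x_{m}\}\in P(B)$ and each $X\in A^{p}$, I define $\Phi _{p}(X):A\rightarrow B$ by $\Phi _{p}(X)(a)=\bigvee \{x\in p\mid X(x)=a\}$ for every $a\in A$; since $X$ takes finitely many values and the blocks of $p$ are pairwise disjoint with join $1$, one checks $\Phi _{p}(X)\in A[B]$. The computational heart of the argument is to verify that each $\Phi _{p}$ is a homomorphism of type $F$. For $X_{1},\ldots ,X_{n}\in A^{p}$ and $a\in A$ one expands, using distributivity of $B$, the meet $\Phi _{p}(X_{1})(a_{1})\wedge \cdots \wedge \Phi _{p}(X_{n})(a_{n})$ into a join of meets $x^{(1)}\wedge \cdots \wedge x^{(n)}$ of blocks; by pairwise disjointness of $p$ every such meet involving two distinct blocks vanishes, so only the diagonal terms with a common block $x$ satisfying $X_{1}(x)=a_{1},\ldots ,X_{n}(x)=a_{n}$ survive. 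Summing over all tuples with $f(a_{1},\ldots ,a_{n})=a$, and using that each block $x$ determines the unique value-tuple $(X_{1}(x),\ldots ,X_{n}(x))$, the right-hand side of the $A[B]$-operation collapses to $\bigvee \{x\in p\mid f(X_{1}(x),\ldots ,X_{n}(x))=a\}=\Phi _{p}(f(X_{1},\ldots ,X_{n}))(a)$, as required.

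Next I would check the cocone condition $\Phi _{q}\circ f_{pq}=\Phi _{p}$ for $p\leq q$. With the refinement notation introduced above, $f_{pq}(X)=X\circ k_{pq}$ is constant equal to $X(x_{i})$ on the blocks $y_{i1},\ldots ,y_{ik_{i}}$ refining $x_{i}$, so $\Phi _{q}(f_{pq}(X))(a)=\bigvee _{i:\,X(x_{i})=a}\bigvee _{j=1}^{k_{i}}y_{ij}=\bigvee _{i:\,X(x_{i})=a}x_{i}=\Phi _{p}(X)(a)$. By the universal property of the inductive limit, the family $(\Phi _{p})$ then induces a unique morphism $\Phi $ from $\lim _{B}(A)$ to $A[B]$ satisfying $\Phi \circ \phi _{p}=\Phi _{p}$ for all $p$, where $\phi _{p}:A^{p}\rightarrow \lim _{B}(A)$ denote the canonical maps.

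Finally I would prove $\Phi $ is an isomorphism by showing it is bijective. For surjectivity, given $Y\in A[B]$ let $a_{1},\ldots ,a_{m}$ enumerate the finitely many elements with $Y(a_{i})\neq 0$ (these values are distinct and pairwise disjoint and join to $1$, so they form a partition $p\in P(B)$); the map $X\in A^{p}$ with $X(Y(a_{i}))=a_{i}$ then satisfies $\Phi _{p}(X)=Y$, so $Y$ lies in the image of $\Phi $. For injectivity, I would first observe that each $\Phi _{p}$ is injective: if $X(c)=a\neq a'=X'(c)$ for some block $c$, then $c\leq \Phi _{p}(X)(a)$ while $c\wedge \Phi _{p}(X')(a)=\bigvee \{c\wedge c'\mid c'\in p,\, X'(c')=a\}=0$ by disjointness and $c\neq 0$, so $\Phi _{p}(X)\neq \Phi _{p}(X')$. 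Since $(P(B),\leq )$ is directed (common refinements are obtained by intersecting blocks), any two elements of $\lim _{B}(A)$ are represented over a single $A^{q}$, whence equality of their $\Phi $-images forces equality already in $A^{q}$ by injectivity of $\Phi _{q}$, hence equality in the limit. Thus $\Phi $ is a bijective homomorphism, i.e.\ an isomorphism of algebras of type $F$. The main obstacle is the homomorphism verification for $\Phi _{p}$, where the join-of-meets definition of the $A[B]$-operations must be matched against the componentwise operations of $A^{p}$ through distributivity of $B$ and the disjointness of the blocks.
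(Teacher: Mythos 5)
Your proof is correct, but note that the paper does not prove this statement at all: Theorem \ref{tash} is quoted verbatim from Ash (Theorem 3 of the cited reference) and used as a black box, so there is no proof in the paper to compare against. Your argument is a sound standalone verification: the cocone $\Phi _{p}(X)(a)=\bigvee \{x\in p\mid X(x)=a\}$ is the standard one, the homomorphism check via distributivity and disjointness of blocks is the genuine computational content, and the bijectivity argument (surjectivity by reading off the partition $\{Y(a_{1}),\ldots ,Y(a_{m})\}$ from the nonzero values of $Y$, injectivity of each $\Phi _{p}$ plus directedness of $P(B)$) is complete. One point worth making explicit: the paper's displayed formula for the operations on $A^{p}$ is a copy of the $A[B]$ formula and does not typecheck, since an element of $A^{p}$ has domain $p$ rather than $A$; your componentwise reading $f(X_{1},\ldots ,X_{n})(c)=f(X_{1}(c),\ldots ,X_{n}(c))$ is the intended one, and it is also the only reading under which the maps $f_{pq}$ (precomposition with $k_{pq}$) are homomorphisms, so you were right to fix it. The only cosmetic gaps are degenerate cases (e.g.\ the trivial Boolean algebra, where $P(B)$ is empty), which are harmless and universally ignored in this context.
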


Until mentioned otherwise, let $A$ be a bounded distributive lattice or a residuated lattice; the definitions we are about to give are valid for both types of structures. For any non-empty subset $X$ of $A$, the {\em co-annihilator of $X$} is the set $X^{\top }=\{a\in A|(\forall x\in X)a\vee x=1\}$. In the case when $X$ consists of a single element $x$, we denote the co-annihilator of $X$ by $x^{\top }$ and call it the {\em co-annihilator of $x$}.

\begin{proposition}
For any $X\subseteq A$, $X^{\top }$ is a filter of $A$.
\label{topfiltru}
\end{proposition}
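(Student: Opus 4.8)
The plan is to verify directly the three defining conditions of a filter: nonemptiness, closure under the relevant binary operation ($\wedge$ in the lattice case, $\odot$ in the residuated case), and upward closure. Since the statement covers both types of structures at once, I would dispose of nonemptiness and upward closure uniformly, and only split into cases for the binary-closure condition. For nonemptiness, I would observe that $1\vee x=1$ for every $x\in X$, so $1\in X^{\top }$. Upward closure is equally uniform: if $a\in X^{\top }$ and $a\leq b$, then for each $x\in X$ we have $1=a\vee x\leq b\vee x\leq 1$, whence $b\vee x=1$ and thus $b\in X^{\top }$. Neither of these steps uses anything beyond the order and the fact that $1$ is the top element.

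The binary-closure condition is where the two cases diverge. For a bounded distributive lattice, given $a,b\in X^{\top }$ and $x\in X$, distributivity yields $(a\wedge b)\vee x=(a\vee x)\wedge (b\vee x)=1\wedge 1=1$, so $a\wedge b\in X^{\top }$. For a residuated lattice I would instead show $a\odot b\in X^{\top }$. Given $a,b\in X^{\top }$ and $x\in X$, I would expand $(a\vee x)\odot (b\vee x)$ by repeatedly applying the distributivity of $\odot $ over $\vee $ from Remark \ref{`calcul`}(i); this produces $(a\odot b)\vee (a\odot x)\vee (x\odot b)\vee (x\odot x)$. Since $z\odot w\leq z$ for all $z,w\in A$ (a consequence of Remark \ref{`calcul`}(iii) together with $w\leq 1$ and $z\odot 1=z$), the last three joinands are each $\leq x$, so the whole expression is bounded above by $(a\odot b)\vee x$. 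But the left-hand side equals $1\odot 1=1$, forcing $1\leq (a\odot b)\vee x$ and hence $(a\odot b)\vee x=1$, i.e. $a\odot b\in X^{\top }$.

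The only genuinely nontrivial step is this last computation in the residuated case; everything else is immediate. The main obstacle, then, is organizing the $\odot $-over-$\vee $ expansion correctly and recognizing that the three superfluous joinands are absorbed into $x$. Once this is seen, the argument collapses to the single observation that $1$ is a join-absorbing top element, and the verification of the filter axioms is complete in both settings.
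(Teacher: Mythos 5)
Your proof is correct. The paper itself does not spell out an argument: it merely cites \cite[Proposition 4.38]{din2}, where the result is proved for BL-algebras, and observes that the proof there carries over verbatim to bounded distributive lattices and to residuated lattices. Your write-up therefore supplies exactly the verification the paper leaves implicit, and it is the standard one. The uniform treatment of nonemptiness (via $1\in X^{\top}$) and upward closure (via $1=a\vee x\leq b\vee x$) is fine, the lattice case is immediate from distributivity, and the residuated case is handled correctly: expanding $(a\vee x)\odot(b\vee x)$ by Remark \ref{`calcul`}(i), bounding the joinands $a\odot x$, $x\odot b$, $x\odot x$ by $x$ using $z\odot w\leq z\odot 1=z$ (Remark \ref{`calcul`}(iii)), and concluding $1\leq (a\odot b)\vee x$. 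One could shorten the residuated case slightly by noting $a\odot b\vee x\geq (a\vee x)\odot (b\vee x)$ directly, but your absorption argument is exactly that computation made explicit. No gaps.
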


\begin{proof}
This result can be found in \cite[Proposition 4.38]{din2} for BL-algebras. The proof there is valid also for bounded distributive lattices and for re\-si\-du\-a\-ted lattices.\end{proof}

\begin{definition}
$A$ is said to be {\em Stone} (respectively {\em strongly Stone}) iff, for all $a\in A$ (respectively all $X\subseteq A$), there exists an element $e\in B(A)$ such that $a^{\top }=<e>$ (respectively $X^{\top }=<e>$).
\label{stone}
\end{definition}

Obviously, any complete Stone lattice (residuated lattice) is strongly Stone, as is shown by Proposition \ref{`sisau`} and the fact that, with the notations in the previous definition, $\displaystyle X^{\top }=\bigcap _{x\in X}x^{\top }$.

We have chosen the previous definition of Stone residuated lattices over the definition from \cite{rcig} for a reason that is explained by Remark \ref{nucdefstone}.

For any bounded distributive lattice or residuated lattice $A$, we shall denote ${\rm Co-Ann}(A)=\{X^{\top }|X\subseteq A\}$ and, for all $F,G\in {\rm Co-Ann}(A)$, we shall denote $F\vee ^{\top }G=(F^{\top }\cap G^{\top })^{\top }$.

\begin{proposition}
Let $A$ be a bounded distributive lattice or a residuated lattice. Then $({\rm Co-Ann}(A),\vee ^{\top },\cap ,^{\top },\{1\},A)$ is a complete Boolean algebra.
\end{proposition}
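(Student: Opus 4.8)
The plan is to realize $({\rm Co-Ann}(A),\vee ^{\top },\cap ,{}^{\top },\{1\},A)$ as the \emph{skeleton} of the bounded distributive lattice $({\cal F}(A),\vee ,\cap ,\{1\},A)$ and then to invoke the classical theorem of Glivenko and Frink, according to which the skeleton of a pseudocomplemented distributive lattice is a Boolean algebra. The advantage of this route is that it yields distributivity for free, which is the property one would otherwise expect to be hardest to verify by a direct computation.

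First I would reduce co-annihilators of arbitrary subsets to co-annihilators of filters, by proving that $X^{\top }=<X>^{\top }$ for every $X\subseteq A$. The inclusion $<X>^{\top }\subseteq X^{\top }$ is immediate from $X\subseteq <X>$, and for the reverse inclusion it suffices to see that the condition $a\vee x=1$ is preserved when passing from the elements $x$ of $X$ to the elements of $<X>$: in the bounded distributive lattice case one has $a\vee (x_{1}\wedge \cdots \wedge x_{n})=\bigwedge _{i=1}^{n}(a\vee x_{i})=1$ by distributivity, while in the residuated case Remark \ref{`calcul`}(i), together with $c\odot d\leq c$ for all $c,d\in A$, gives $1=(a\vee x)\odot (a\vee y)\leq a\vee (x\odot y)$, hence $a\vee (x\odot y)=1$. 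Therefore ${\rm Co-Ann}(A)=\{F^{\top }\mid F\in {\cal F}(A)\}$.

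The key step is to identify, for every filter $F$, the co-annihilator $F^{\top }$ with the pseudocomplement of $F$ in ${\cal F}(A)$, that is, with the largest filter $G$ satisfying $F\cap G=\{1\}$. That $F\cap F^{\top }=\{1\}$ is clear, because $a\in F\cap F^{\top }$ forces $a=a\vee a=1$. For maximality, let $G$ be any filter with $F\cap G=\{1\}$ and let $b\in G$; then for every $x\in F$ we have $<b>\cap <x>\subseteq G\cap F=\{1\}$, and since $<b>\cap <x>=<b\vee x>$ by Proposition \ref{`sisau`} (respectively $[b)\cap [x)=[b\vee x)$ in the lattice case), we obtain $b\vee x=1$, so that $b\in F^{\top }$ and hence $G\subseteq F^{\top }$. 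This shows that ${\cal F}(A)$ is a pseudocomplemented distributive lattice whose pseudocomplementation is $^{\top }$, and that ${\rm Co-Ann}(A)$ is exactly its skeleton $\{F\in {\cal F}(A)\mid F=F^{\top \top }\}$. I expect this identification of the co-annihilator with the lattice-theoretic pseudocomplement to be the main obstacle.

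Glivenko's theorem applied to $L={\cal F}(A)$ now yields that ${\rm Co-Ann}(A)$ is a Boolean algebra in which the meet is the meet $\cap$ of $L$, the join of $F$ and $G$ is $(F^{\top }\cap G^{\top })^{\top }=F\vee ^{\top }G$, the complement is $^{\top }$, the least element is the bottom $\{1\}$ of $L$ and the greatest element is $\{1\}^{\top }=A$; these coincide with the operations in the statement. Finally, for completeness I would note that ${\cal F}(A)$ is a complete lattice, since filters are closed under arbitrary intersections and $A\in {\cal F}(A)$; moreover the skeleton is closed under these intersections, because if every $F_{i}$ is skeletal then $(\bigcap _{i}F_{i})^{\top \top }\leq F_{j}^{\top \top }=F_{j}$ for each $j$ forces $(\bigcap _{i}F_{i})^{\top \top }=\bigcap _{i}F_{i}$. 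Thus ${\rm Co-Ann}(A)$ admits arbitrary meets, and being a Boolean algebra it is a complete Boolean algebra.
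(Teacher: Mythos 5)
Your proof is correct, and it is genuinely more informative than what the paper offers: the paper's ``proof'' of this proposition is only a pointer to Leu\c{s}tean's thesis, where the result is established for BL-algebras, together with the remark that the argument carries over. Your route --- reduce $X^{\top }$ to $<X>^{\top }$, identify $F^{\top }$ with the pseudocomplement of $F$ in the bounded distributive lattice ${\cal{F}}(A)$, recognize ${\rm Co-Ann}(A)$ as the skeleton, and invoke Glivenko--Frink --- is a clean conceptual packaging that treats the lattice and residuated cases uniformly (the only place the two cases diverge is the computation showing $a\vee x=1$ passes to generated filters, where you correctly use distributivity in one case and $(a\vee x)\odot (a\vee y)\leq a\vee (x\odot y)$ via Remark \ref{`calcul`}(i) in the other), and it yields distributivity of the Boolean structure and the identification of the operations $\vee ^{\top }$, $\cap $, $^{\top }$, $\{1\}$, $A$ for free. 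All the individual steps check out: $F\cap F^{\top }=\{1\}$ because $a=a\vee a=1$; maximality of $F^{\top }$ via $<b>\cap <x>=<b\vee x>$ (Proposition \ref{`sisau`}, resp.\ $[b)\cap [x)=[b\vee x)$); closure of the skeleton under arbitrary intersections, which together with the top element $A=\{1\}^{\top }$ gives completeness. Two cosmetic points you may want to make explicit if you write this up: the final equality $(\bigcap _{i}F_{i})^{\top \top }=\bigcap _{i}F_{i}$ also uses the general inclusion $G\subseteq G^{\top \top }$, and the fact that ${\rm Co-Ann}(A)$, defined as the image of $^{\top }$, coincides with the fixed points of $^{\top \top }$ rests on the identity $F^{\top \top \top }=F^{\top }$, which follows from antitonicity of $^{\top }$ and $G\subseteq G^{\top \top }$.
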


\begin{proof}
This result can be found in \cite{leo} for BL-algebras. Its proof is also valid for bounded distributive lattices and residuated lattices.\end{proof}

\begin{proposition}

Let $A$ be a residuated lattice and $({\cal{L}}(A),\lambda )$ the re\-ti\-cu\-la\-tion of $A$. Then $\lambda :B(A)\rightarrow B({\cal{L}}(A))$ is an isomorphism of Boolean algebras.
\label{izomb}
\end{proposition}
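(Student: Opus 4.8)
The plan is to show that $\lambda$ restricts to a map $B(A)\rightarrow B({\cal{L}}(A))$, that this restriction is a homomorphism of bounded lattices, and that it is a bijection onto $B({\cal{L}}(A))$. A bijective homomorphism of bounded lattices between two Boolean algebras is automatically an isomorphism of Boolean algebras, since its inverse is again such a homomorphism and complements are uniquely determined by the lattice structure in any distributive lattice; this would give the conclusion. Before starting I would record one computational fact to be used throughout: every $e\in B(A)$ is idempotent, that is $e^{n}=e$ for all $n\in {\rm I\! N}^{*}$. Indeed, if $e'$ is the complement of $e$, then $e\vee e'=1$ and $e\wedge e'=0$, so Remark \ref{`calcul`}(ii) gives $e\odot e'=e\wedge e'=0$, whence $e=e\odot 1=e\odot (e\vee e')=(e\odot e)\vee (e\odot e')=e^{2}$ by Remark \ref{`calcul`}(i).

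First I would check well-definedness and the homomorphism property. For $e\in B(A)$ with complement $e'$, conditions 2), 3) and property b) give $\lambda (e)\vee \lambda (e')=\lambda (e\vee e')=\lambda (1)=1$ and $\lambda (e)\wedge \lambda (e')=\lambda (e\wedge e')=\lambda (0)=0$, so $\lambda (e)$ is complemented in ${\cal{L}}(A)$, i.e. $\lambda (e)\in B({\cal{L}}(A))$. Those same conditions 2), 3) and property b) show at once that $\lambda |_{B(A)}$ preserves $\vee $, $\wedge $, $0$ and $1$. Injectivity then follows from condition 5) together with idempotency: if $e,f\in B(A)$ and $\lambda (e)=\lambda (f)$, then $\lambda (e)\leq \lambda (f)$ yields $e^{n}\leq f$ for some $n$, hence $e\leq f$, and symmetrically $f\leq e$, so $e=f$ (alternatively this is Lemma \ref{`egalfiltre`} applied to elements whose generated filters are the principal up-sets of idempotents).

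The step I expect to be the main obstacle is surjectivity, because a priori the surjectivity of $\lambda$ (condition 4)) only supplies \emph{some} preimage of a given $x\in B({\cal{L}}(A))$, and I must produce a preimage that is actually complemented. Let $y$ be the complement of $x$ and, by condition 4), choose $a,b\in A$ with $\lambda (a)=x$ and $\lambda (b)=y$, so that $\lambda (a)\vee \lambda (b)=1=\lambda (1)$ and $\lambda (a)\wedge \lambda (b)=0=\lambda (0)$. From the first equality, condition 2) gives $\lambda (a\vee b)=\lambda (1)$, and condition 5) forces $a\vee b=1$, whence $a\odot b=a\wedge b$ by Remark \ref{`calcul`}(ii). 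From the second equality, property b) gives $\lambda (a\wedge b)=\lambda (0)$, and condition 5) yields $(a\wedge b)^{n}=0$ for some $n\in {\rm I\! N}^{*}$.

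The candidate is then $u=a^{n}$, paired with $v=b^{n}$. By property c), $\lambda (u)=\lambda (a)=x$; from $\lambda (u\vee v)=\lambda (u)\vee \lambda (v)=\lambda (a)\vee \lambda (b)=\lambda (1)$ and condition 5) we get $u\vee v=1$; and $u\odot v=a^{n}\odot b^{n}=(a\odot b)^{n}=(a\wedge b)^{n}=0$. Applying Remark \ref{`calcul`}(ii) to $u\vee v=1$ then gives $u\wedge v=u\odot v=0$, so $v$ is a complement of $u$, i.e. $u\in B(A)$, and $\lambda (u)=x$. This establishes surjectivity and, combined with the earlier steps, completes the proof. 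The essential point throughout is that idempotency of complemented elements lets condition 5) collapse to genuine order comparisons, and that taking the $n$-th power kills the nilpotent meet $a\wedge b$ without changing the $\lambda$-image.
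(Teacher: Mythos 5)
Your proof is correct. Note, though, that the paper does not actually write out a proof of Proposition \ref{izomb}: it simply cites \cite{leo}, where the result is proved for BL-algebras, and observes that the argument carries over to residuated lattices. So your contribution is a self-contained verification rather than a variant of an argument given in the text. Every step checks out: the idempotency $e^{2}=e$ for $e\in B(A)$ via Remark \ref{`calcul`} (i) and (ii) is the right lemma to make condition 5) collapse to order comparisons (giving injectivity), and your surjectivity argument --- pass from a preimage $a$ of $x\in B({\cal{L}}(A))$ to $a^{n}$, where $n$ is chosen so that $(a\wedge b)^{n}=0$ for a preimage $b$ of the complement, and then use $a^{n}\vee b^{n}=1$ together with Remark \ref{`calcul`} (ii) to conclude $a^{n}\in B(A)$ --- is precisely the maneuver the author herself deploys later, inside the proof of Proposition \ref{stoneiff}, to show that $\lambda(e)\in B({\cal{L}}(A))$ forces $e^{n}\in B(A)$ for some $n$. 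Your write-up thus makes explicit the mechanism that the paper relies on but delegates to a reference; the only stylistic difference is that the paper would phrase $\lambda(a\vee b)=1\Rightarrow a\vee b=1$ and $\lambda(a\wedge b)=0\Rightarrow (a\wedge b)^{n}=0$ via Remark \ref{unu} rather than directly via conditions 3) and 5), which is the same computation.
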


\begin{proof}
This result can be found in \cite{leo} for BL-algebras. The proof there is also valid for the more general case of residuated lattices.\end{proof}

\begin{definition}
Let $m$ be an infinite cardinal. An $m$-complete lattice is a lattice $L$ with the property that any subset $X$ of $L$ with $|X|\leq m$ has an infimum and a supremum in $L$.
\end{definition}

\begin{theorem}{\rm \cite[Theorem 1]{dav}}\\ Let $L$ be a bounded distributive lattice and $m$ an infinite cardinal. Then the following are equivalent:

\noindent (i) for each subset $X$ of $L$ with $|X|\leq m$, there exists an element $e\in B(L)$ such that $X^{\top }=<e>$;

\noindent (ii) $L$ is a Stone lattice and $B(L)$ is an $m$-complete Boolean algebra;

\noindent (iii) $L^{\top \top }=\{(l^{\top })^{\top }|l\in L\}$ is an $m$-complete Boolean sublattice of ${\cal{F}}(L)$;

\noindent (iv) for all $l,p\in L$, $(l\wedge p)^{\top }=l^{\top }\vee p^{\top }$ and, for each subset $X$ of $L$ with $|X|\leq m$, there exists an element $x\in L$ such that $X^{\top \top}=x^{\top }$;

\noindent (v) for each subset $X$ of $L$ with $|X|\leq m$, $X^{\top }\vee X^{\top \top }=L$.
\label{caractlstone}
\end{theorem}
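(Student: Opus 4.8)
The plan is to prove the five conditions equivalent by reducing each of them to a single structural statement about the complete Boolean algebra ${\rm Co-Ann}(L)$ of co-annihilator filters and its relation to the Boolean center $B({\cal F}(L))$ of the filter lattice. First I would record the basic calculus of $^{\top}$: it is antitone on subsets, $^{\top\top}$ is a closure operator, $X^{\top}=X^{\top\top\top}$, and $X^{\top}=\bigcap_{x\in X}x^{\top}$, so that every $X^{\top}$ is a genuine co-annihilator. I would also note that $l\mapsto l^{\top}$ is \emph{isotone} on elements (if $l\leq p$ then $l^{\top}\subseteq p^{\top}$), and that $X^{\top}\cap X^{\top\top}=\{1\}$ always.

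The key bridge is the behaviour of central elements. For $e\in B(L)$ with complement $e'$ one checks directly (using $a\vee e=1\Leftrightarrow a\geq e'$ in the distributive setting) that $e^{\top}=<e'>$, hence $<e>^{\top}=<e'>$ and $e^{\top\top}=<e>$; in particular each $<e>$ is a co-annihilator lying in $L^{\top\top}$. A short argument then shows that the complemented filters are \emph{exactly} the principal filters generated by central elements, i.e. $B({\cal F}(L))=\{<e>\mid e\in B(L)\}$, with $e\mapsto <e>$ a Boolean isomorphism onto $B({\cal F}(L))$ and $<e>$ complemented by $<e'>$. Combining this with $X^{\top}\cap X^{\top\top}=\{1\}$ yields the decisive criterion: a co-annihilator $F$ has the form $<e>$ for some $e\in B(L)$ if and only if $F\vee F^{\top}=L$, equivalently $F\in B({\cal F}(L))$.

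With this criterion in hand most of the implications become short. Condition (v), read with $F=X^{\top}$ and $F^{\top}=X^{\top\top}$, says precisely that $X^{\top}\vee X^{\top\top}=L$, so the criterion gives $X^{\top}=<e>$ for central $e$, which is (i); conversely (i) gives $X^{\top}=<e>$, whence $X^{\top}\vee X^{\top\top}=<e>\vee<e'>=L$, so (i)$\Leftrightarrow$(v). For (i)$\Leftrightarrow$(ii) the $|X|=1$ instance of (i) is exactly the Stone property, and $m$-completeness of $B(L)$ is obtained by feeding the families $\{e_t'\}$ and $\{e_t\}$ into (i); conversely, given Stone and $m$-completeness, for $X$ with $|X|\leq m$ I write $X^{\top}=\bigcap_{x}x^{\top}=\bigcap_x<e_x>$ with $e_x\in B(L)$ and show $\bigcap_x<e_x>=<e>$ for $e=\sup^{B(L)}_x e_x$. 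This identity is the technical heart: the inclusion $<e>\subseteq\bigcap_x<e_x>$ is clear, and for the reverse I take $a\geq e_x$ for all $x$, use isotonicity and Stone to write $a^{\top}=<d>$ with $d\leq\inf_x e_x'=e'$, and conclude $a\geq d'\geq e$ from $a\in a^{\top\top}=<d'>$. For (iii) and (iv) I would check, again via the central-element calculus, that under (i) one has $L^{\top\top}=B({\cal F}(L))\cong B(L)$, that the meets and finite joins of its members coincide with those of ${\cal F}(L)$, and that closure of this family under $m$-meets and $m$-joins is exactly the $m$-completeness supplied by (i); the displayed identity in (iv) is the Stone-type identity for $^{\top}$, and its second clause that $X^{\top\top}$ be of the form $x^{\top}$ expresses that the double co-annihilator of a small set is again a principal co-annihilator, which the criterion once more translates into membership in $B({\cal F}(L))$.

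The main obstacle I anticipate is the $m$-completeness bookkeeping rather than any single clever step: one must verify that intersections, and Co-Ann joins $\vee^{\top}$, of families of size $\leq m$ of central principal filters remain central principal, which is what couples the Stone property to $m$-completeness of $B(L)$, and one must keep track of the fact that the suprema used in the criterion are taken in $B(L)$ while the identities themselves live in ${\cal F}(L)$. Once the identity $\bigcap_x<e_x>=<\sup^{B(L)}_x e_x>$ is established, cycling $(i)\Rightarrow(ii)\Rightarrow(iii)\Rightarrow(iv)\Rightarrow(v)\Rightarrow(i)$, or equivalently checking each condition against (ii), is routine.
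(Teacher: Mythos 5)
The paper does not prove this statement: Theorem \ref{caractlstone} is imported verbatim from Davey's paper (\cite[Theorem 1]{dav}) and used as a black box, so there is no proof of the author's own to compare yours against. Judged on its own terms, your proposal is a sound and essentially complete route to Davey's theorem. The load-bearing pieces are all present and correct: the identity $e^{\top}=<e'>$ for $e\in B(L)$ (via $a\vee e=1\Leftrightarrow a\geq e'$), the identification $B({\cal F}(L))=\{<e>\mid e\in B(L)\}$, the resulting criterion that a co-annihilator $F$ equals some $<e>$ with $e$ central iff $F\vee F^{\top}=L$ (using $F\cap F^{\top}=\{1\}$), and the key identity $\bigcap_{x}<e_{x}>=<e>$ with $e=\sup^{B(L)}_{x}e_{x}$, whose nontrivial inclusion you correctly handle by passing through $a^{\top}=<d>$, $d\leq\inf_{x}e_{x}'=e'$, and $a\in a^{\top\top}=<d'>$ rather than by pretending $a$ is an upper bound in $B(L)$. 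Two small caveats: the map $e\mapsto <e>$ is order-\emph{reversing} ($<0>=L$ is the top filter), so it is a dual isomorphism of Boolean algebras, not an isomorphism --- harmless for the completeness transfer but worth stating correctly; and your treatment of (iii) is the thinnest part, since one must pin down that the $m$-ary meets and joins of the sublattice $L^{\top\top}$ are the ones computed in ${\cal F}(L)$ (intersections and filter joins) before the equivalence with $m$-completeness of $B(L)$ goes through --- you flag this bookkeeping but do not carry it out.
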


A bounded distributive lattice will be called an {\em $m$-Stone lattice} iff the conditions of Theorem \ref{caractlstone} hold for it.

\section{Further Preservation Properties of the Reticulation Functor}
\label{preservation}

\hspace*{11pt} In this section we continue the study we began in \cite{eu2} on preservation properties of ${\cal{L}}$.

\begin{proposition}
${\cal{L}}$ preserves subalgebras. Namely, if $A$ and $B$ are residuated lattices such that $B$ is a subalgebra of $A$ and $({\cal{L}}(A),\lambda )$ is a reticulation of $A$, then $(\lambda (B),\lambda \mid _{B})$ is a reticulation of $B$.
\end{proposition}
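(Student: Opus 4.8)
The plan is to verify directly that the pair $(\lambda (B),\lambda \mid _{B})$ satisfies conditions 1)--5) of Definition \ref{reticulatia}, taking $\lambda (B)$ to be the sublattice of ${\cal{L}}(A)$ generated by (in fact equal to) the image of $B$ under $\lambda $. First I would check that $\lambda (B)$ is indeed a bounded distributive sublattice of ${\cal{L}}(A)$: since $B$ is a subalgebra of $A$, it is closed under $\vee $, $\wedge $, $\odot $ and contains $0$ and $1$; applying conditions 1), 2), 3) and property b) of Lemma \ref{`abc`} to elements of $B$ shows that $\lambda (B)$ is closed under the lattice operations of ${\cal{L}}(A)$ and contains $\lambda (0)=0$ and $\lambda (1)=1$. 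Distributivity is inherited from ${\cal{L}}(A)$, so $\lambda (B)$ is a bounded distributive lattice.

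Next I would dispatch conditions 1), 2), 3) for $\lambda \mid _{B}$: these are the identities $\lambda (a\odot b)=\lambda (a)\wedge \lambda (b)$, $\lambda (a\vee b)=\lambda (a)\vee \lambda (b)$ and $\lambda (0)=0$, $\lambda (1)=1$, all of which hold for $\lambda $ on all of $A$ and therefore hold a fortiori for its restriction to $B$ (using that $B$ is closed under $\odot $ and $\vee $ so that the left-hand sides land in $\lambda (B)$). Condition 4), surjectivity of $\lambda \mid _{B}$ onto $\lambda (B)$, is immediate by the very definition of $\lambda (B)$ as the image of $B$.

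The only condition that requires genuine work is 5): for all $a,b\in B$, $\lambda (a)\leq \lambda (b)$ iff there exists $n\in {\rm I\! N}^{*}$ with $a^{n}\leq b$. The forward implication is the substantive direction. The reverse implication is trivial, since $a^{n}\leq b$ in $B$ is the same inequality in $A$, and then condition 5) for $\lambda $ on $A$ gives $\lambda (a)\leq \lambda (b)$. For the forward direction I would invoke condition 5) for the ambient reticulation $(\mathcal{L}(A),\lambda )$: if $a,b\in B$ and $\lambda (a)\leq \lambda (b)$, then condition 5) applied in $A$ yields $n\in {\rm I\! N}^{*}$ with $a^{n}\leq b$; the crucial point is that, because $B$ is a subalgebra, $a^{n}=a\odot \cdots \odot a$ is computed inside $B$, so this witness $n$ already serves within $B$. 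Thus condition 5) transfers without obstruction.

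The step I expect to be the main obstacle — and the one deserving the most care in the write-up — is precisely the forward direction of condition 5), because it is the only place where the interaction between the order on $B$ and the order on $A$ matters: one must be sure that the power $a^{n}$ and the inequality $a^{n}\leq b$ are unambiguous between $B$ and $A$. Since $B$ is a subalgebra, $\odot $ and $\leq $ agree on $B$ with their restrictions from $A$, so no subtlety actually arises and the inequality produced in $A$ holds verbatim in $B$. With all five conditions verified, $(\lambda (B),\lambda \mid _{B})$ is a reticulation of $B$, which by the uniqueness statement of Theorem \ref{`unicitatea`} is canonically identified with ${\cal{L}}(B)$, establishing that ${\cal{L}}$ preserves subalgebras.
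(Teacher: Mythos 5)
Your proposal is correct and follows essentially the same route as the paper: one verifies directly that conditions 1)--5) restrict from $\lambda$ to $\lambda\mid_{B}$ and that $\lambda(B)$ is a bounded distributive lattice. The only cosmetic difference is that you obtain distributivity of $\lambda(B)$ by noting it is a sublattice of the distributive lattice ${\cal{L}}(A)$, whereas the paper re-derives it from conditions 4) and 1) together with the identity $a\odot (b\vee c)=(a\odot b)\vee (a\odot c)$; both are valid.
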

\begin{proof}
From properties 2), 3) and b) it follows that $\lambda (B)$ is a bounded lattice. Properties 4) and 1) and Remark \ref{`calcul`}, (i), ensure us that it is also distributive.

The fact that $\lambda $ verifies properties 1), 2), 3), 5) implies that $\lambda \mid _{B}$ satisfies these properties. Obviously, $\lambda \mid _{B}:B\rightarrow \lambda (B)$ satisfies condition 4).\end{proof}

\begin{proposition}
${\cal{L}}$ preserves finite direct products.
\end{proposition}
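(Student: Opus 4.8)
The plan is to show that, given two residuated lattices $A$ and $B$ with reticulations $(\mathcal{L}(A),\lambda_A)$ and $(\mathcal{L}(B),\lambda_B)$, the pair $(\mathcal{L}(A)\times\mathcal{L}(B),\lambda_A\times\lambda_B)$ is a reticulation of the product residuated lattice $A\times B$, where $(\lambda_A\times\lambda_B)(a,b)=(\lambda_A(a),\lambda_B(b))$. By the uniqueness part of Theorem~\ref{`unicitatea`}, this will yield $\mathcal{L}(A\times B)\cong\mathcal{L}(A)\times\mathcal{L}(B)$; an easy induction then handles arbitrary finite products. First I would recall that $\mathcal{L}(A)\times\mathcal{L}(B)$ is a bounded distributive lattice (products of such lattices are again such, with componentwise operations and bounds $(0,0)$, $(1,1)$), so it is a legitimate candidate for a reticulation.

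The bulk of the work is then the verification of conditions 1)--5) of Definition~\ref{reticulatia} for $\lambda_A\times\lambda_B$, all of which reduce to componentwise checks since the operations $\odot$, $\vee$, $\wedge$ and the bounds on $A\times B$ are defined componentwise. For conditions 1), 2) and 3) I would simply unfold the definitions: for instance $(\lambda_A\times\lambda_B)((a_1,b_1)\odot(a_2,b_2))=(\lambda_A(a_1\odot a_2),\lambda_B(b_1\odot b_2))$, which by condition 1) for $\lambda_A$ and $\lambda_B$ equals $(\lambda_A(a_1)\wedge\lambda_A(a_2),\lambda_B(b_1)\wedge\lambda_B(b_2))=(\lambda_A\times\lambda_B)(a_1,b_1)\wedge(\lambda_A\times\lambda_B)(a_2,b_2)$; conditions 2) and 3) are identical in spirit. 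Surjectivity (condition 4)) is immediate from the surjectivity of $\lambda_A$ and $\lambda_B$, since any $(u,v)\in\mathcal{L}(A)\times\mathcal{L}(B)$ has preimages $u=\lambda_A(a)$, $v=\lambda_B(b)$, whence $(u,v)=(\lambda_A\times\lambda_B)(a,b)$.

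The one condition that requires genuine attention is 5), the order-theoretic characterization. I would need to show that $(\lambda_A\times\lambda_B)(a_1,b_1)\leq(\lambda_A\times\lambda_B)(a_2,b_2)$ holds iff there exists $n\in\mathbb{N}^{*}$ with $(a_1,b_1)^{n}\leq(a_2,b_2)$. Since the order on the product lattice is componentwise and $(a,b)^{n}=(a^{n},b^{n})$, the right-hand side says $a_1^{n}\leq a_2$ and $b_1^{n}\leq b_2$ for a common $n$, while the left-hand side says $\lambda_A(a_1)\leq\lambda_A(a_2)$ and $\lambda_B(b_1)\leq\lambda_B(b_2)$. By condition 5) for $\lambda_A$ and for $\lambda_B$ separately, the latter gives exponents $n_A$ and $n_B$ with $a_1^{n_A}\leq a_2$ and $b_1^{n_B}\leq b_2$. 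The subtle point is passing from two possibly different exponents to a single common one: here I would take $n=\max(n_A,n_B)$ and use that in a residuated lattice $x^{m}\leq x^{k}$ whenever $m\geq k$ (since $x\leq 1$ forces $x^{m}=x^{k}\odot x^{m-k}\leq x^{k}$, using Remark~\ref{`calcul`}(iii)), so that $a_1^{n}\leq a_1^{n_A}\leq a_2$ and similarly for $b$. This uniformization of the exponent is the only place where anything beyond routine componentwise bookkeeping is needed, and it is precisely the step I expect a careless argument to gloss over.
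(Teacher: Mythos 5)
Your proposal is correct and follows essentially the same route as the paper: componentwise verification of conditions 1)--5), with the only nontrivial point being the uniformization of the exponents in condition 5) via $n=\max(n_A,n_B)$ and the monotonicity of powers (Remark~\ref{`calcul`}(iii)), exactly as the paper does. The only cosmetic difference is that the paper treats the $n$-fold product directly rather than the binary case plus induction.
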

\begin{proof}
Let $A_{1},A_{2},\ldots A_{n}$ be residuated lattices and $\displaystyle A=\prod _{i=1}^{n}A_{i}$. Let $({\cal{L}}(A_{i}),$\linebreak $\lambda _{i})$ be a reticulation of $A_{i}$, for each $i\in \overline{1,n}$, and $\displaystyle \lambda :A\rightarrow \prod _{i=1}^{n}{\cal{L}}(A_{i})$, for all $(a_{1},\ldots ,a_{n})\in A$, $\lambda (a_{1},\ldots ,a_{n})=(\lambda _{1}(a_{1}),\ldots ,\lambda _{n}(a_{n}))$.

We shall prove that $\displaystyle (\prod _{i=1}^{n}{\cal{L}}(A_{i}),\lambda )$ is a reticulation of $A$.

The fact that $\lambda _{1},\ldots ,\lambda _{n}$ satisfy conditions 1)-4) implies that $\lambda $ satisfies conditions 1)-4).

Let us now prove that $\lambda $ satisfies condition 5). Let $a=(a_{1},\ldots ,a_{n}),b=(b_{1},\ldots ,b_{n})\in A$. Assume that there exists $m\in {\rm I\! N}^{*}$ such that $a^{m}\leq b$, which is equivalent to: for all $i\in \overline{1,n}$, $a_{i}^{m}\leq b_{i}$. This implies that, for all $i\in \overline{1,n}$, $\lambda _{i}(a_{i})\leq \lambda _{i}(b_{i})$, that is: $\lambda (a)\leq \lambda (b)$. Conversely, suppose that $\lambda (a)\leq \lambda (b)$, that is: for all $i\in \overline{1,n}$, $\lambda _{i}(a_{i})\leq \lambda _{i}(b_{i})$, which is equivalent to: for all $i\in \overline{1,n}$, there exists $m_{i}\in {\rm I\! N}^{*}$ such that $a_{i}^{m_{i}}\leq b_{i}$. If we denote $m=\max \{m_{i}|i\in \overline{1,n}\}$, we get: for all $i\in \overline{1,n}$, $a_{i}^{m}\leq b_{i}$, that is: $a^{m}\leq b$ (See Remark \ref{`calcul`}, (iii)).\end{proof}

\begin{proposition}
${\cal{L}}$ does not preserve quotients.
\end{proposition}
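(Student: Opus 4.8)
Since the assertion is negative, the plan is to prove it by exhibiting a counterexample: a residuated lattice $A$ and a filter $F$ for which the reticulation of $A/F$ fails to be the expected quotient of ${\cal{L}}(A)$. First I would isolate the natural comparison. The canonical projection $\pi :A\rightarrow A/F$ is a morphism in ${\cal{RL}}$, so the functor produces a bounded lattice morphism ${\cal{L}}(\pi ):{\cal{L}}(A)\rightarrow {\cal{L}}(A/F)$ with ${\cal{L}}(\pi )(\lambda _{A}(a))=\lambda _{A/F}(a/F)$. Because $\pi $ and both reticulation maps are surjective, ${\cal{L}}(\pi )$ is surjective; and since $f/F=1/F$ for every $f\in F$, each $\lambda _{A}(f)$ is sent to the top of ${\cal{L}}(A/F)$. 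Hence ${\cal{L}}(\pi )$ kills the filter $\lambda _{A}(F)$ and factors through the lattice quotient ${\cal{L}}(A)/\lambda _{A}(F)$, yielding a surjective morphism $\Phi :{\cal{L}}(A)/\lambda _{A}(F)\rightarrow {\cal{L}}(A/F)$. This is precisely the surjection ``from the quotient of the reticulation to the reticulation of the quotient'' announced in the introduction, and the proposition reduces to finding $A,F$ for which $\Phi $ is not injective.

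The next step is to locate where injectivity can fail. Applying Lemma \ref{`egalfiltre`} inside $A/F$, one has $\lambda _{A/F}(a/F)=\lambda _{A/F}(b/F)$ iff $<a/F>\, =\, <b/F>$, and the filter correspondence for quotients identifies $<a/F>$ with $<a>\vee F$; so $\Phi $ merges the classes of $\lambda _{A}(a)$ and $\lambda _{A}(b)$ exactly when $<a>\vee F=<b>\vee F$. The congruence defining ${\cal{L}}(A)/\lambda _{A}(F)$, on the other hand, merges $\lambda _{A}(a)$ and $\lambda _{A}(b)$ iff there is a \emph{single} $e\in F$ with $\lambda _{A}(a\odot e)=\lambda _{A}(b\odot e)$, i.e. $<a\odot e>\, =\, <b\odot e>$. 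Thus I would aim for $A$, $F$ and $a,b\in A$ such that the global condition $<a>\vee F=<b>\vee F$ holds while no single $e\in F$ equalizes $<a\odot e>$ and $<b\odot e>$. Note immediately that this cannot happen when $F$ is principal (take $e$ to be a generator), nor when $A$ is finite; so any counterexample must live in an \emph{infinite} residuated lattice and use a \emph{non-principal} filter $F$.

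For the construction I would search among infinite residuated lattices whose monoid operation is genuinely non-idempotent, so that the powers $a^{n}$ keep descending and a non-principal filter is required to detect the equivalence after passing to the quotient: natural candidates are infinite MV-chains such as Chang's algebra with its coinfinitesimal filter, or reduced products of finite MV-chains modulo a non-principal filter. In each case I would compute ${\cal{PF}}(A)$, the image filter $\lambda _{A}(F)$, and ${\cal{PF}}(A/F)$ explicitly, and then compare ${\cal{L}}(A)/\lambda _{A}(F)$ with ${\cal{L}}(A/F)$, trying to produce two elements separated modulo $\lambda _{A}(F)$ but merged in ${\cal{L}}(A/F)$.

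The hard part, I expect, is a real tension that must be confronted head-on. The filter isomorphism of Proposition \ref{`izomf`}, combined with the elementary fact that in a bounded distributive lattice the congruence modulo a filter $G$ coincides with the relation ``$[x)\vee G=[y)\vee G$'', strongly suggests that the global and single-$e$ conditions above always coincide, which would force $\Phi $ to be an isomorphism; indeed several of the most natural infinite attempts (local MV-chains, Frechet-type filters on products) collapse back to isomorphisms. So the crux is either to exhibit an explicit non-principal $F$ for which the two conditions provably diverge, or to recognize that the failure of preservation is categorical rather than at the level of objects (for instance, that ${\cal{L}}$ does not realize the canonical projection as a canonical projection compatibly with all choices). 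Settling which of these is the intended reading, and then verifying the divergence on one concrete infinite example, is the step I anticipate to be the most delicate.
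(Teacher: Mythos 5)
You do not actually prove the proposition: the write-up ends by describing where a counterexample would have to live and flagging the ``delicate step'' of producing one, so as it stands there is a genuine gap --- no counterexample is exhibited. However, your reduction is correct and, taken seriously, it shows that the gap cannot be filled. You correctly observe that the comparison map $\Phi $ identifies $\lambda (a)/\lambda (F)$ with $\lambda (b)/\lambda (F)$ iff $<a>\vee F=<b>\vee F$, whereas the congruence modulo $\lambda (F)$ identifies $\lambda (a)$ with $\lambda (b)$ iff a \emph{single} $e\in F$ gives $<a\odot e>=<b\odot e>$; and you correctly note that the two conditions coincide when $F$ is principal (take $e$ to be a generator), hence for every finite $A$. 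This already collides with the paper's own proof, which uses a finite six-element $A$ and the principal filter $F=<a>=\{a,1\}$. Indeed, since by condition 5) the order on ${\cal{PF}}(A)$ is reverse inclusion, $<a>=\{a,1\}$ lies strictly \emph{above} the two incomparable elements $<b>=\{a,b,1\}$ and $<c>=\{a,c,d,1\}$; the Hasse diagram of ${\cal{L}}(A)$ printed in the paper transposes $<a>$ and $<b>$. With the correct order one gets $<b>\wedge <a>=<b\odot a>=<b>$ and $<c>\wedge <a>=<c\odot a>=<d>=<c>$, which are distinct, so $<b>$ and $<c>$ are \emph{not} congruent modulo $\lambda (F)$, ${\cal{L}}(A)/\lambda (F)$ has four elements, and both it and ${\cal{L}}(A/F)$ are the four-element Boolean lattice. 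The paper's example therefore does not establish the claim.

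Worse for the statement itself, the tension you identify in your last paragraph resolves against it even for non-principal $F$. If $<a>\vee F=<b>\vee F$, then there are $n,m\in {\rm I\!N}^{*}$ and $e,e'\in F$ with $a^{n}\odot e\leq b$ and $b^{m}\odot e'\leq a$; setting $e''=e\odot e'\in F$, one computes $(a\odot e'')^{2n}=(a^{n}\odot e)\odot (a^{n}\odot e^{2n-1}\odot (e')^{2n})\leq b\odot (e\odot e')=b\odot e''$ and symmetrically $(b\odot e'')^{2m}\leq a\odot e''$, so $<a\odot e''>=<b\odot e''>$ and the single witness always exists. Hence $\Phi $ is always injective, ${\cal{L}}(A)/\lambda (F)\cong {\cal{L}}(A/F)$ for every residuated lattice $A$ and every filter $F$, and no example of the kind you were hunting for can exist: under the paper's own reading of ``quotient'' (quotient by a filter on each side), the proposition is false. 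The only way to salvage a non-preservation statement would be to reinterpret ``quotient'' on the ${\cal{D}}01$ side, e.g.\ by allowing arbitrary lattice congruences, which do not all arise from filters. So while your attempt is incomplete as a proof, your structural analysis is sound, and it is the paper's argument that fails.
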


\begin{proof}
Here is an example of residuated lattice from \cite{kow}: $A=\{0,a,b,c,d,1\}$, with the structure described below.

\begin{center}
\begin{picture}(60,100)(0,0)

\put(37,11){\circle*{3}}
\put(35,0){$0$}

\put(37,11){\line(3,4){12}}
\put(49,27){\circle*{3}}
\put(53,24){$d$}
\put(49,27){\line(0,1){20}}
\put(49,47){\circle*{3}}

\put(53,44){$c$}

\put(49,47){\line(-3,4){12}}
\put(37,63){\circle*{3}}
\put(41,63){$a$}

\put(37,11){\line(-1,1){26}}
\put(11,37){\circle*{3}}
\put(3,34){$b$}
\put(11,37){\line(1,1){26}}
\put(37,63){\line(0,1){20}}

\put(37,83){\circle*{3}}
\put(35,85){$1$}
\end{picture}
\end{center}

\begin{center}
\begin{tabular}{cc}
\begin{tabular}{c|cccccc}
$\rightarrow $ & $0$ & $a$ & $b$ & $c$ & $d$ & $1$ \\ \hline
$0$ & $1$ & $1$ & $1$ & $1$ & $1$ & $1$ \\
$a$ & $0$ & $1$ & $b$ & $c$ & $c$ & $1$ \\
$b$ & $c$ & $1$ & $1$ & $c$ & $c$ & $1$ \\
$c$ & $b$ & $1$ & $b$ & $1$ & $a$ & $1$ \\
$d$ & $b$ & $1$ & $b$ & $1$ & $1$ & $1$ \\
$1$ & $0$ & $a$ & $b$ & $c$ & $d$ & $1$
\end{tabular}
& \hspace*{11pt}

\begin{tabular}{c|cccccc}
$\odot $ & $0$ & $a$ & $b$ & $c$ & $d$ & $1$ \\ \hline
$0$ & $0$ & $0$ & $0$ & $0$ & $0$ & $0$ \\
$a$ & $0$ & $a$ & $b$ & $d$ & $d$ & $a$ \\
$b$ & $0$ & $b$ & $b$ & $0$ & $0$ & $b$ \\
$c$ & $0$ & $d$ & $0$ & $d$ & $d$ & $c$ \\
$d$ & $0$ & $d$ & $0$ & $d$ & $d$ & $d$ \\
$1$ & $0$ & $a$ & $b$ & $c$ & $d$ & $1$
\end{tabular}
\end{tabular}
\end{center}

We choose the filter $F=<a>=\{a,1\}$ and show that ${\cal{L}}(A/F)$ and ${\cal{L}}(A)/\lambda (F)$ are not isomorphic.

For determining the image of the reticulation functor we shall use the second construction of the reticulation from \cite{eu1}, that we reminded in Section \ref{preliminaries}.

$<0>=A$, $<a>=\{a,1\}$, $<b>=\{b,a,1\}$, $<c>=<d>=\{c,d,a,1\}$, $<1>=\{1\}$, so ${\cal{L}}(A)=\{<0>,<a>,<b>,<c>,<1>\}$, with the following lattice structure:

\begin{center}
\begin{picture}(100,90)(0,0)

\put(50,11){\circle*{3}}
\put(36,0){$<0>$}
\put(50,11){\line(0,1){20}}

\put(50,31){\circle*{3}}
\put(55,25){$<b>$}
\put(50,31){\line(1,1){20}}
\put(70,51){\circle*{3}}
\put(75,48){$<c>$}

\put(50,31){\line(-1,1){20}}
\put(30,51){\circle*{3}}
\put(0,48){$<a>$}
\put(50,71){\line(1,-1){20}}
\put(50,71){\line(-1,-1){20}}
\put(50,71){\circle*{3}}
\put(36,75){$<1>$}
\end{picture}
\end{center}

$\lambda (F)=\{<a>,<1>\}$. ${\cal{L}}(A)/\lambda (F)=\{<0>/\lambda (F),<a>/\lambda (F),<b>/\lambda (F),<c>/\lambda (F),<1>/\lambda (F)\}$. For all $x\in A$, $<x>/\lambda (F)=\{<y>\in {\cal{L}}(A)|(\exists <e>\in \lambda (F))<x>\wedge <e>=<y>\wedge <e>\}$. $<0>/\lambda (F)=\{<0>\}$, $<b>/\lambda (F)=\{<b>,<c>\}=<c>/\lambda (F)$, $<a>/\lambda (F)=<1>/\lambda (F)=\lambda (F)$. So ${\cal{L}}(A)/\lambda (F)=\{<0>/\lambda (F),<b>/\lambda (F),<1>/\lambda (F)\}$, which is the bounded distributive lattice with three elements.

The table of the operation $\leftrightarrow $ on $A$ shows that: $0/F=\{0\}$, $a/F=1/F=F=\{a,1\}$, $b/F=\{b\}$, $c/F=\{c,d\}=d/F$, hence $A/F=\{0/F,b/F,c/F,1/F\}$ and ${\cal{L}}(A/F)=\{<0/F>,<b/F>,<c/F>,<1/F>\}$. For all $x\in A$, $<x/F>=\{y/F\in A/F|(\exists n\in {\rm I\! N}^{*})(x/F)^{n}\leq y/F\}=\{y/F\in A/F|(\exists n\in {\rm I\! N}^{*})x^{n}/F\leq y/F\}=\{y/F\in A/F|(\exists n\in {\rm I\! N}^{*})x^{n}\rightarrow y\in F\}$. We get: $<0/F>=A/F$, $<b/F>=\{a/F,b/F,1/F\}$, $<c/F>=\{a/F,c/F,1/F\}$, $<1/F>=\{1/F\}$. Therefore ${\cal{L}}(A/F)$ has four distinct elements.

So ${\cal{L}}(A/F)$ and ${\cal{L}}(A)/\lambda (F)$ are not isomorphic, as their cardinalities are different.\end{proof}

\begin{remark}
Let $A$ be a residuated lattice, $F$ a filter of $A$ and $({\cal{L}}(A),\lambda )$ the reticulation of $A$. Then there exists a surjective bounded lattice morphism from ${\cal{L}}(A)/\lambda (F)$ to ${\cal{L}}(A/F)$.
\end{remark}

\begin{proof}
Let $({\cal{L}}(A/F),\lambda _{1})$ be the reticulation of $A/F$. Let $h:{\cal{L}}(A)/\lambda (F)\rightarrow {\cal{L}}(A/F)$, for all $a\in A$, $h(\lambda (a)/\lambda (F))=\lambda _{1}(a/F)$. The surjectivity of $\lambda $ implies that $h$ is completely defined.

Let $a,b\in A$. By Lemma \ref{`apartine2`}, $\lambda (a)/\lambda (F)=\lambda (b)/\lambda (F)$ iff $\lambda (a)\leftrightarrow \lambda (b)\in \lambda (F)$ iff $\lambda (a\leftrightarrow b)\in \lambda (F)$ iff $a\leftrightarrow b \in F$ iff $a/F=b/F$, which implies $\lambda _{1}(a/F)=\lambda _{1}(b/F)$. Hence the function $h$ is well defined. Obviously, the converse implication is not necessarily satisfied, so $h$ is not always injective.

Since $\lambda _{1}$ is surjective, we have that $h$ is surjective.

The fact that $\lambda $ and $\lambda _{1}$ satisfy conditions 1), 2) and 3) implies that $h$ is a bounded lattice morphism.\end{proof}


\begin{proposition}
${\cal{L}}$ preserves inductive limits.
\label{limind}
\end{proposition}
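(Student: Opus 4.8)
The plan is to build a reticulation of the inductive limit $A$ directly out of the inductive limit of the reticulations $\mathcal{L}(A_i)$, and then to invoke the uniqueness of the reticulation (Theorem \ref{`unicitatea`}) in order to transport the structure maps. So let $(A_i,\phi_{ij})$ be an inductive system in ${\cal{RL}}$ with inductive limit $(A,(\phi_i)_{i\in I})$, constructed as in Section \ref{preliminaries}, and let $({\cal{L}}(A_i),\lambda_i)$ be a reticulation of each $A_i$. Applying ${\cal{L}}$ produces an inductive system $({\cal{L}}(A_i),{\cal{L}}(\phi_{ij}))$ in ${\cal{D}}01$; let $(D,(\psi_i)_{i\in I})$ be its inductive limit, built by the analogous construction for ${\cal{D}}01$. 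Throughout I will use the defining identity ${\cal{L}}(\phi_{ij})(\lambda_i(\alpha))=\lambda_j(\phi_{ij}(\alpha))$ together with the compatibilities $\psi_j\circ {\cal{L}}(\phi_{ij})=\psi_i$ and $\phi_j\circ \phi_{ij}=\phi_i$.

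First I would define a map $\lambda:A\rightarrow D$. Since every element of $A$ has the form $\phi_i(\alpha)$ for some $i\in I$ and some $\alpha\in A_i$, I set $\lambda(\phi_i(\alpha))=\psi_i(\lambda_i(\alpha))$. To see that this is well defined, suppose $\phi_i(\alpha)=\phi_j(\beta)$; by the construction of the limit in ${\cal{RL}}$ there is $k\geq i,j$ with $\phi_{ik}(\alpha)=\phi_{jk}(\beta)$, whence ${\cal{L}}(\phi_{ik})(\lambda_i(\alpha))=\lambda_k(\phi_{ik}(\alpha))=\lambda_k(\phi_{jk}(\beta))={\cal{L}}(\phi_{jk})(\lambda_j(\beta))$, and applying $\psi_k$ and using $\psi_k\circ{\cal{L}}(\phi_{ik})=\psi_i$, $\psi_k\circ{\cal{L}}(\phi_{jk})=\psi_j$ gives $\psi_i(\lambda_i(\alpha))=\psi_j(\lambda_j(\beta))$. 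Next I would verify that $(D,\lambda)$ is a reticulation of $A$, i.e. conditions 1)--5) of Definition \ref{reticulatia}. For 1)--3), given finitely many elements of $A$ I use directedness to represent them all at a common stage $k$ (replacing $\alpha\in A_i$ by $\phi_{ik}(\alpha)\in A_k$, which leaves $\lambda$ unchanged because $\psi_k\circ{\cal{L}}(\phi_{ik})=\psi_i$), compute the operation inside $A_k$, and push the corresponding identity for $\lambda_k$ through the bounded lattice morphism $\psi_k$. Condition 4) follows from the joint surjectivity of the $\psi_i$ in $D$ combined with the surjectivity of each $\lambda_i$.

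The main obstacle is condition 5), where I must match the single existential quantifier in ``$\exists\, n,\ a^n\leq b$'' against the existential quantifier hidden in the order relation of the inductive limit. Representing $a,b$ at a common stage $k$ as $a=\phi_k(\alpha)$, $b=\phi_k(\beta)$, the inequality $\lambda(a)\leq\lambda(b)$, that is $\psi_k(\lambda_k(\alpha))\leq\psi_k(\lambda_k(\beta))$, holds in $D$ iff there is $l\geq k$ with $\lambda_l(\phi_{kl}(\alpha))\leq\lambda_l(\phi_{kl}(\beta))$; by condition 5) applied in $A_l$ this is equivalent to the existence of $n$ with $\phi_{kl}(\alpha)^n\leq\phi_{kl}(\beta)$, i.e. $\phi_{kl}(\alpha^n)\leq\phi_{kl}(\beta)$, which by the definition of the order on $A$ is exactly $a^n\leq b$. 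Keeping track of the two stages $k$ and $l$ simultaneously, and using that $\phi_{kl}$ is a morphism so that it commutes with $\odot$ (see Remark \ref{`calcul`}), is the delicate bookkeeping here; the directedness of $I$ is what lets me collapse the nested existentials into a single witness.

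Finally, having shown that $(D,\lambda)$ is a reticulation of $A$, Theorem \ref{`unicitatea`} furnishes a bounded lattice isomorphism $f:{\cal{L}}(A)\rightarrow D$ with $f\circ\lambda_A=\lambda$. It then remains to check that $f$ intertwines the two families of structure maps: for $\alpha\in A_i$ we have $f({\cal{L}}(\phi_i)(\lambda_i(\alpha)))=f(\lambda_A(\phi_i(\alpha)))=\lambda(\phi_i(\alpha))=\psi_i(\lambda_i(\alpha))$, and the surjectivity of $\lambda_i$ upgrades this to $f\circ{\cal{L}}(\phi_i)=\psi_i$ for every $i\in I$. Hence $({\cal{L}}(A),({\cal{L}}(\phi_i))_{i\in I})$ is, up to this canonical isomorphism, the inductive limit of $({\cal{L}}(A_i),{\cal{L}}(\phi_{ij}))$ in ${\cal{D}}01$, which is precisely the assertion that ${\cal{L}}$ preserves inductive limits.
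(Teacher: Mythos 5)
Your proof is correct, but it takes a genuinely different route from the paper's. The paper proves directly that $({\cal{L}}(A),({\cal{L}}(\phi _{i}))_{i\in I})$ satisfies the universal property of the inductive limit of $({\cal{L}}(A_{i}),{\cal{L}}(\phi _{ij}))$: given any compatible cone $(M,(f_{i})_{i\in I})$ in ${\cal{D}}01$, it defines the mediating morphism $f$ by $f(\lambda (a))=f_{i}(\lambda _{i}(a_{i}))$ for $a=[a_{i}]$, and the bulk of the work is checking that $f$ is well defined (which is where the paper does the quantifier juggling, via condition 5) and property c)), that it is a bounded lattice morphism, and that it is unique. You instead never touch the universal property of the colimit on the ${\cal{L}}(A)$ side: you take the concretely constructed colimit $D$ of the lattices, equip it with a map $\lambda :A\rightarrow D$, verify axioms 1)--5) of Definition \ref{reticulatia}, and let Theorem \ref{`unicitatea`} produce the comparison isomorphism. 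The trade-off is clear: your argument leans on the explicit element-wise construction of inductive limits in ${\cal{D}}01$ (in particular on the description of the order in $D$, needed for condition 5)) and on the uniqueness of the reticulation, in exchange for avoiding all the cone/mediating-morphism bookkeeping; the paper's argument works with an arbitrary cone $(M,(f_{i}))$ and so never needs the construction of colimits in ${\cal{D}}01$, only their existence. The delicate step is essentially the same in both: your verification of condition 5) for $(D,\lambda )$ (commuting the existential over $l\geq k$ with the existential over $n$) is the mirror image of the paper's well-definedness check for $f$, and both rely on the explicit construction of the colimit in ${\cal{RL}}$ at exactly that point. Your closing step, upgrading $f({\cal{L}}(\phi _{i})(\lambda _{i}(\alpha )))=\psi _{i}(\lambda _{i}(\alpha ))$ to $f\circ {\cal{L}}(\phi _{i})=\psi _{i}$ via the surjectivity of $\lambda _{i}$, is the right way to conclude that the isomorphism is compatible with the structure maps.
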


\begin{proof}
Let $((A_{i})_{i\in I},(\phi _{ij})_{\stackrel{\scriptstyle i,j\in I}{\scriptstyle i\leq j}})$ be an inductive system of residuated lattices and $(A,(\phi _{i})_{i\in I})$ its inductive limit, constructed like in Section \ref{preliminaries}. For all $i\in I$, let $({\cal{L}}(A_{i}),\lambda _{i})$ be the reticulation of $A_{i}$ and $({\cal{L}}(A),\lambda )$ the reticulation of $A$. Then, obviously, $(({\cal{L}}(A_{i}))_{i\in I},({\cal{L}}(\phi _{ij}))_{\stackrel{\scriptstyle i,j\in I}{\scriptstyle i\leq j}})$ is an inductive system of bounded distributive lattices. We shall prove that $({\cal{L}}(A),({\cal{L}}(\phi _{i}))_{i\in I})$ is its inductive limit.

For all $i\leq j$, we have $\phi _{j}\circ \phi _{ij}=\phi _{i}$, thus ${\cal{L}}(\phi _{j})\circ {\cal{L}}(\phi _{ij})={\cal{L}}(\phi _{i})$.

Now let $M$ be a bounded distributive lattice and, for all $i\in I$, $f_{i}:{\cal{L}}(A_{i})\rightarrow M$ be bounded lattice morphisms such that, for every $i\leq j$, $f_{j}\circ {\cal{L}}(\phi _{ij})=f_{i}$. Let us define a function $f:{\cal{L}}(A)\rightarrow M$. Let $a\in A$. Then, by the construction of $A$, there exist $i\in I$ and $a_{i}\in A_{i}$ such that $a=[a_{i}]=\phi _{i}(a_{i})$. Set $f(\lambda (a))=f_{i}(\lambda _{i}(a_{i}))$. The surjectivity of $\lambda $ shows that $f$ is completely defined.

Let $a,b\in A$ such that $\lambda (a)=\lambda (b)$. Let $i,j\in I$, $a_{i}\in A_{i}$ and $b_{j}\in A_{j}$ such that $a=[a_{i}]=\phi _{i}(a_{i})$ and $b=[b_{j}]=\phi _{j}(b_{j})$. By condition 5), $\lambda (a)=\lambda (b)$ iff there exist $n,p\in {\rm I\! N}^{*}$ such that $a^{n}\leq b$ and $b^{p}\leq a$. $a^{n}\leq b$ iff $[a_{i}]^{n}\leq [b_{j}]$ iff $\phi _{i}(a_{i})^{n}\leq \phi _{j}(b_{j})$ iff $\phi _{i}(a_{i}^{n})\leq \phi _{j}(b_{j})$ iff there exists $k\in {\rm I\! N}^{*}$ such that $i\leq k$, $j\leq k$ and $\phi _{ik}(a_{i}^{n})\leq \phi _{jk}(b_{j})$. Property c), the commutative diagrams below and the fact that $f_{k}$ and $\lambda _{k}$ are order-preserving show that: $f_{i}(\lambda _{i}(a_{i}))=f_{i}(\lambda _{i}(a_{i}^{n}))=f_{k}({\cal{L}}(\phi _{ik})(\lambda _{i}(a_{i}^{n})))=f_{k}(\lambda _{k}(\phi _{ik}(a_{i}^{n})))\leq f_{k}(\lambda _{k}(\phi _{jk}(b_{j})))=f_{k}({\cal{L}}(\phi _{jk})(\lambda _{j}(b_{j})))=f_{j}(\lambda _{j}(b_{j}))$. Analogously, $b^{p}\leq a$ implies that $f_{j}(\lambda _{j}(b_{j}))\leq f_{i}(\lambda _{i}(a_{i}))$. Therefore $f_{i}(\lambda _{i}(a_{i}))=f_{j}(\lambda _{j}(b_{j}))$, which is equivalent to $f(\lambda (a))=f(\lambda (b))$. Hence $f$ is well defined.

\begin{center}
\begin{picture}(100,90)(0,0)
\put(18,67){$A_{i}$}
\put(30,70){\vector(1,0){40}}
\put(45,75){$\phi _{ik}$}
\put(72,67){$A_{k}$}
\put(1,32){${\cal{L}}(A_{i})$}
\put(72,32){${\cal{L}}(A_{k})$}
\put(30,35){\vector(1,0){40}}
\put(35,40){${\cal{L}}(\phi _{ik})$}
\put(24,65){\vector(0,-1){20}}
\put(13,52){$\lambda _{i}$}
\put(75,65){\vector(0,-1){20}}

\put(77,52){$\lambda _{k}$}
\put(75,30){\vector(0,-1){20}}
\put(77,18){$f_{k}$}

\put(30,30){$\vector(2,-1){40}$}
\put(39,10){$f_{i}$}
\put(73,0){$M$}
\end{picture}
\end{center}

Let $a,b\in A$ and $i,j\in I$, $a_{i}\in A_{i}$ and $b_{j}\in A_{j}$ such that $a=[a_{i}]$ and $b=[b_{j}]$. There exists $k\in I$ such that $i\leq k$ and $j\leq k$. Then $a\vee b=[\phi _{ik}(a_{i})\vee \phi _{jk}(b_{j})]$, with $\phi _{ik}(a_{i})\vee \phi _{jk}(b_{j})\in A_{k}$, so $f(\lambda (a)\vee \lambda (b))=f(\lambda(a\vee b))=f_{k}(\lambda _{k}(\phi _{ik}(a_{i})\vee \phi _{jk}(b_{j})))=f_{k}(\lambda _{k}(\phi _{ik}(a_{i})))\vee f_{k}(\lambda _{k}(\phi _{jk}(b_{j})))=f_{k}({\cal{L}}(\phi _{ik})(\lambda _{i}(a_{i})))\vee f_{k}({\cal{L}}(\phi _{jk})(\lambda _{j}(b_{j})))=f_{i}(\lambda _{i}(a_{i}))\vee f_{j}(\lambda _{j}(b_{j}))=f(\lambda (a))\vee f(\lambda (b))$, by condition 2) and the fact that $f_{k}$, $\phi _{ik}$ and $\phi _{jk}$ are bounded lattice morphisms. Analogously, but using property b) instead of condition 2), we get that $f(\lambda (a)\wedge \lambda (b))=f(\lambda (a))\wedge f(\lambda (b))$. For all $i\in I$, $0=\phi _{i}(0)=[0]$, so $f(0)=f(\lambda (0))=f_{i}(\lambda _{i}(0))=0$, by condition 3) and the fact that $f_{i}$ is a bounded lattice morphism. Analogously, $f(1)=1$. Hence $f$ is a bounded lattice morphism. We have used the surjectivity of $\lambda $.

Now let us prove the uniqueness of $f$. Let $g:{\cal{L}}(A)\rightarrow M$, such that, for all $i\in I$, $g\circ {\cal{L}}(\phi _{i})=f_{i}$.

\begin{center}
\begin{picture}(100,90)(0,0)
\put(18,67){$A_{i}$}
\put(30,70){\vector(1,0){40}}
\put(45,75){$\phi _{i}$}
\put(72,67){$A$}
\put(1,32){${\cal{L}}(A_{i})$}
\put(72,32){${\cal{L}}(A)$}
\put(30,35){\vector(1,0){40}}
\put(35,40){${\cal{L}}(\phi _{i})$}
\put(24,65){\vector(0,-1){20}}
\put(13,52){$\lambda _{i}$}
\put(75,65){\vector(0,-1){20}}
\put(77,52){$\lambda $}
\put(75,30){\vector(0,-1){20}}
\put(77,18){$g$}
\put(30,30){$\vector(2,-1){40}$}

\put(39,10){$f_{i}$}
\put(73,0){$M$}
\end{picture}
\end{center}

The diagrams above are commutative, which justifies the following equalities. Let $a\in A$ and $i\in I$, $a_{i}\in A_{i}$, such that $a=[a_{i}]=\phi _{i}(a_{i})$. Then $g(\lambda (a))=g(\lambda (\phi _{i}(a_{i})))=g({\cal{L}}(\phi _{i})(\lambda _{i}(a_{i})))=f_{i}(\lambda _{i}(a_{i}))=f(\lambda (a))$. By the surjectivity of $\lambda $, we get that $g=f$.\end{proof}

\begin{proposition}
${\cal{L}}$ preserves Boolean powers.

\end{proposition}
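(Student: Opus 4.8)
The plan is to reduce the statement to the previous proposition on inductive limits. The key observation is that Theorem \ref{tash} identifies the Boolean power $A[B]$ with the inductive limit $\lim_B(A)$ of the inductive system $((A^p)_{p\in P(B)},(f_{pq}))$, where each $A^p = \{X \mid X:p\rightarrow A\}$. Since $p$ is a finite partition of $B$, each $A^p$ is a finite direct power of $A$, namely $A^p \cong A^{|p|}$ as residuated lattices. Thus the first thing I would make precise is that $\mathcal{L}(A[B]) \cong \mathcal{L}(\lim_B(A))$, and then unwind what $\mathcal{L}$ does to the inductive system defining that limit.

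First I would apply Proposition \ref{limind}: since $A[B] \cong \lim_B(A)$ and this is the inductive limit of $(A^p, f_{pq})$, and $\mathcal{L}$ preserves inductive limits, we obtain $\mathcal{L}(A[B]) \cong \lim_B(\mathcal{L}(A))$ provided the inductive system $(\mathcal{L}(A^p),\mathcal{L}(f_{pq}))$ is exactly the system whose limit defines the Boolean power $\mathcal{L}(A)[B]$. The bridge between these is the proposition that $\mathcal{L}$ preserves finite direct products: since $A^p \cong A^{|p|}$, that proposition gives $\mathcal{L}(A^p) \cong \mathcal{L}(A)^{|p|} \cong \mathcal{L}(A)^p$ as bounded distributive lattices. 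So the second step is to check that, under these identifications, the transition morphisms $\mathcal{L}(f_{pq})$ coincide with the transition morphisms $f_{pq}^{\mathcal{L}}$ of the system $(\mathcal{L}(A)^p, f_{pq}^{\mathcal{L}})$ that defines $\mathcal{L}(A)[B]$ via Theorem \ref{tash}. Both transition maps are induced by the same refinement function $k_{pq}:q\rightarrow p$ (precomposition $X\mapsto X\circ k_{pq}$), so this naturality is essentially a diagram chase using the definition $\mathcal{L}(f)(\lambda_A(a))=\lambda_B(f(a))$.

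Having matched the two inductive systems, I would then conclude by a second application of Theorem \ref{tash}, this time to the bounded distributive lattice $\mathcal{L}(A)$: its Boolean power $\mathcal{L}(A)[B]$ is isomorphic to $\lim_B(\mathcal{L}(A))$. Chaining the isomorphisms,
\[
\mathcal{L}(A[B]) \cong \mathcal{L}(\lim_B(A)) \cong \lim_B(\mathcal{L}(A)) \cong \mathcal{L}(A)[B],
\]
which is exactly the assertion that $\mathcal{L}$ preserves Boolean powers.

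The main obstacle I anticipate is the middle isomorphism, specifically verifying that applying $\mathcal{L}$ to the inductive system of a Boolean power yields precisely the inductive system of the reticulation's Boolean power, and not merely an isomorphic one. The subtlety is bookkeeping: one must confirm that the identification $\mathcal{L}(A^p)\cong \mathcal{L}(A)^p$ is natural in $p$, i.e.\ commutes with the refinement-induced transition maps $f_{pq}$. This requires tracking how the finite-product isomorphism from the preceding proposition interacts with precomposition by $k_{pq}$, which is routine but must be done carefully so that Proposition \ref{limind} can legitimately be invoked on a genuinely identical (not just equivalent) system. Once this naturality is established, everything else follows mechanically from Theorem \ref{tash} and the two preservation results already proved.
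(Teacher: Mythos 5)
Your proposal is correct and follows exactly the paper's argument: the paper's proof is precisely the chain ${\cal{L}}(A[B])\cong {\cal{L}}(\lim _{B}(A))\cong \lim _{B}({\cal{L}}(A))\cong {\cal{L}}(A)[B]$ obtained from Theorem \ref{tash}, Proposition \ref{limind}, and Theorem \ref{tash} again. The naturality bookkeeping you flag for the middle isomorphism (identifying $({\cal{L}}(A^{p}),{\cal{L}}(f_{pq}))$ with the system defining ${\cal{L}}(A)[B]$ via preservation of finite direct products) is left implicit in the paper, so your version simply spells out what the paper takes for granted.
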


\begin{proof}
Let $A$ be a residuated lattice and $B$ a Boolean algebra. By using in turn Theorem \ref{tash}, Proposition \ref{limind} and again Theorem \ref{tash}, we get: ${\cal{L}}(A[B])\cong {\cal{L}}(\lim _{B}(A))\cong \lim _{B}({\cal{L}}(A))\cong {\cal{L}}(A)[B]$.\end{proof}

\section{Stone Algebras}
\label{stonealgebras}

\hspace*{11pt} This section contains other preservation properties of ${\cal{L}}$, along with several properties transferred between ${\cal{D}}01$ and ${\cal{RL}}$ through ${\cal{L}}$.

Concerning Stone and strongly Stone structures (by structure we mean here bounded distributive lattice or residuated lattice), the first question that arises is whether they exist. Naturally, any strongly Stone structure is Stone and any complete Stone structure is strongly Stone. The answer to the question above is given by the fact that the trivial structure is strongly Stone and, moreover, any chain is strongly Stone, because a chain $A$ clearly has all co-annihilators equal to $\{1\}$, except for $1^{\top }$, which is equal to A.

Until mentioned otherwise, let $A$ be a residuated lattice and $({\cal{L}}(A),\lambda )$ its reticulation.

\begin{remark}
For any $a\in A$, we have: $\lambda (a)=1$ iff $a=1$, and $\lambda (a)=0$ iff there exists $n\in {\rm I\! N}^{*}$ such that $a^{n}=0$.
\label{unu}
\end{remark}

\begin{proof}
By conditions 3) and 5), we get: $\lambda (a)=1$ iff $1\leq \lambda (a)$ iff $\lambda (1)\leq \lambda (a)$ iff there exists $n\in {\rm I\! N}^{*}$ such that $1^{n}\leq a$ iff $1\leq a$ iff $a=1$.

Again by conditions 3) and 5), we have: $\lambda (a)=0$ iff $\lambda (a)\leq 0$ iff $\lambda (a)\leq \lambda (0)$ iff there exists $n\in {\rm I\! N}^{*}$ such that $a^{n}\leq 0$ iff there exists $n\in {\rm I\! N}^{*}$ such that $a^{n}=0$.\end{proof}

\begin{remark}
For any subset $X$ of $A$, $\lambda (X^{\top })=\lambda (X)^{\top }$.
\label{comuttop}
\end{remark}

\begin{proof}
By conditions 4) and 2) and Remark \ref{unu}, we have: $\lambda (X)^{\top }=\{\lambda (a)|a\in A,(\forall x\in X)\lambda (a)\vee \lambda (x)=1\}=\{\lambda (a)|a\in A,(\forall x\in X)\lambda (a\vee x)=1\}=\{\lambda (a)|a\in A,(\forall x\in X)a\vee x=1\}=\lambda (X^{\top })$.\end{proof}

\begin{proposition}
$A$ is a Stone residuated lattice iff ${\cal{L}}(A)$ is a Stone lattice.
\label{stoneiff}
\end{proposition}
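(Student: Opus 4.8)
The plan is to transfer the Stone condition across $\lambda $ using three facts already established: that $\lambda $ commutes with co-annihilators (Remark \ref{comuttop}), that it restricts to a Boolean algebra isomorphism on the Boolean centers (Proposition \ref{izomb}), and that it is injective on filters (Lemma \ref{lambdaegal}). Both implications reduce to rewriting an equality of the form ``co-annihilator $=$ principal filter generated by a complemented element'' and either pushing it forward through $\lambda $ or pulling it back.

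For the forward implication, I would take an arbitrary element of ${\cal{L}}(A)$; by surjectivity of $\lambda $ (condition 4)) it has the form $\lambda (a)$ for some $a\in A$. Assuming $A$ is Stone, I pick $e\in B(A)$ with $a^{\top }=<e>$, apply $\lambda $ to both sides, and simplify: the left-hand side becomes $\lambda (a)^{\top }$ by Remark \ref{comuttop}, while the right-hand side becomes $<\lambda (e)>$ by Remark \ref{lambdagen}. Since $\lambda (e)\in B({\cal{L}}(A))$ by Proposition \ref{izomb}, this exhibits $\lambda (a)^{\top }$ as the principal filter generated by a complemented element of ${\cal{L}}(A)$, so ${\cal{L}}(A)$ is Stone.

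For the converse, given $a\in A$ and assuming ${\cal{L}}(A)$ is Stone, there is some $f\in B({\cal{L}}(A))$ with $\lambda (a)^{\top }=<f>$. Here the Boolean isomorphism of Proposition \ref{izomb} does the essential work through its \emph{surjectivity}: it yields an element $e\in B(A)$ with $\lambda (e)=f$. Rewriting both sides via Remark \ref{comuttop} and Remark \ref{lambdagen} gives $\lambda (a^{\top })=\lambda (<e>)$, and then Lemma \ref{lambdaegal} (injectivity of $\lambda $ on filters) strips off $\lambda $ to give $a^{\top }=<e>$ with $e\in B(A)$, so $A$ is Stone.

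I do not expect a genuine obstacle here, since the whole content is packaged in Remark \ref{comuttop} and Proposition \ref{izomb}. The only point requiring care is to invoke the isomorphism of Proposition \ref{izomb} in the correct direction in each half: in the forward direction one merely needs that $\lambda $ maps $B(A)$ into $B({\cal{L}}(A))$, whereas in the converse one genuinely needs its surjectivity to produce a complemented preimage $e$. A second subtlety is that the final descent in the converse must go through Lemma \ref{lambdaegal} (equality of \emph{filters}) rather than through pointwise injectivity of $\lambda $, which does not hold.
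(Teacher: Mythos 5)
Your proof is correct, and the forward implication coincides with the paper's argument step for step. The converse, however, takes a genuinely different (and shorter) route at the one nontrivial point: producing a complemented element of $A$ generating $a^{\top }$. You obtain it in one stroke from the \emph{surjectivity} of the Boolean isomorphism $\lambda :B(A)\rightarrow B({\cal{L}}(A))$ of Proposition \ref{izomb}, which immediately gives $e\in B(A)$ with $\lambda (e)=f$. The paper instead takes an \emph{arbitrary} preimage $e\in A$ of $f$ (not assumed complemented) and proves by hand that some power $e^{n}$ lies in $B(A)$: choosing $g\in A$ with $\lambda (e)\wedge \lambda (g)=0$ and $\lambda (e)\vee \lambda (g)=1$, it deduces via Remark \ref{unu} that $(e\odot g)^{n}=0$ and $e^{n}\vee g^{n}=1$ for some $n$, whence $e^{n}\wedge g^{n}=e^{n}\odot g^{n}=0$ by Remark \ref{`calcul`} (ii), so $e^{n}\in B(A)$; since $\lambda (e^{n})=\lambda (e)$ by property c), the conclusion follows as in your argument. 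Both are valid; yours is cleaner, while the paper's is more self-contained, since Proposition \ref{izomb} is only quoted from an external source, and the explicit computation is in effect an inline proof of exactly the surjectivity you invoke. The paper also reuses this ``pass to a power'' device verbatim in the proof of Theorem \ref{caractlrstone}, so spelling it out here has a further payoff. Your two cautionary remarks at the end (which half of Proposition \ref{izomb} is needed where, and that the final descent must go through Lemma \ref{lambdaegal} on filters rather than pointwise injectivity of $\lambda $) are both accurate and match the paper's use of Proposition \ref{topfiltru} together with Lemma \ref{lambdaegal}.
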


\begin{proof}
Assume that $A$ is a Stone residuated lattice and let $l\in {\cal{L}}(A)$. $\lambda $ is surjective, hence there exists $a\in A$ with $\lambda (a)=l$. By Definition \ref{stone}, there exists $e\in B(A)$ such that $a^{\top }=<e>$. The fact that $e\in B(A)$ obviously implies that $\lambda (e)\in B({\cal{L}}(A))$ (see also Proposition \ref{izomb}). By Remarks \ref{comuttop} and \ref{lambdagen}, $l^{\top }=\lambda (a)^{\top }=\lambda (a^{\top })=\lambda (<e>)=<\lambda (e)>$. Therefore ${\cal{L}}(A)$ is a Stone lattice.

Now conversely: assume that ${\cal{L}}(A)$ is a Stone lattice and let $a\in A$. By Definition \ref{stone}, the surjectivity of $\lambda $ and Remark \ref{comuttop}, there exists $e\in A$, such that $\lambda (e)\in B({\cal{L}}(A))$ and $\lambda (a^{\top })=\lambda (a)^{\top }=<\lambda (e)>$. The fact that $\lambda (e)\in B({\cal{L}}(A))$ and the surjectivity of $\lambda $ imply that there exists $f\in A$ such that $\lambda (e)\wedge \lambda (f)=0$ and $\lambda (e)\vee \lambda (f)=1$. By condition 1) and Remark \ref{unu}, the following equivalences hold: $\lambda (e)\wedge \lambda (f)=0$ iff $\lambda (e\odot f)=0$ iff there exists $n\in {\rm I\! N}^{*}$ such that $(e\odot f)^{n}=0$. Let us fix one such $n$. Hence $e^{n}\odot f^{n}=0$. By property c), condition 2) and Remark \ref{unu}, we get the following equivalences: $\lambda (e)\vee \lambda (f)=1$ iff $\lambda (e^{n})\vee \lambda (f^{n})=1$ iff $\lambda (e^{n}\vee f^{n})=1$ iff $e^{n}\vee f^{n}=1$. By Remark \ref{`calcul`}, (ii), we get $e^{n}\wedge f^{n}=e^{n}\odot f^{n}=0$. Therefore $e^{n}\in B(A)$. By property c) and Remark \ref{lambdagen}, $\lambda (a^{\top })=<\lambda (e)>=<\lambda (e^{n})>=\lambda (<e^{n}>)$. By Proposition \ref{topfiltru} and Lemma \ref{lambdaegal}, we get $a^{\top }=<e^{n}>$. So $A$ is a Stone residuated lattice.\end{proof}

\begin{proposition}
$A$ is a strongly Stone residuated lattice iff ${\cal{L}}(A)$ is a strongly Stone lattice.
\label{stronglystoneiff}
\end{proposition}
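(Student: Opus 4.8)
The plan is to mirror almost exactly the proof of Proposition \ref{stoneiff}, replacing single elements $a$ by arbitrary subsets $X$ throughout. The essential point is that Remark \ref{comuttop} already holds for arbitrary subsets $X\subseteq A$, so the co-annihilator computation transfers without modification; the only extra care needed is in the backward direction, where I must produce a Boolean element of $A$ from a Boolean element of ${\cal L}(A)$, and this is precisely the argument already carried out in Proposition \ref{stoneiff}.

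First I would assume $A$ is strongly Stone and let $Y\subseteq {\cal L}(A)$ be arbitrary. By surjectivity of $\lambda$ (condition 4)), I can write $Y=\lambda(X)$ for some $X\subseteq A$ (concretely, $X=\lambda^{-1}(Y)$). Since $A$ is strongly Stone, Definition \ref{stone} gives $e\in B(A)$ with $X^{\top}=<e>$. As in Proposition \ref{stoneiff}, $e\in B(A)$ implies $\lambda(e)\in B({\cal L}(A))$ by Proposition \ref{izomb}. Then Remarks \ref{comuttop} and \ref{lambdagen} yield
\[
Y^{\top}=\lambda(X)^{\top}=\lambda(X^{\top})=\lambda(<e>)=<\lambda(e)>,
\]
so ${\cal L}(A)$ is strongly Stone.

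For the converse, I would assume ${\cal L}(A)$ is strongly Stone and take an arbitrary $X\subseteq A$. Applying the strongly Stone hypothesis to the subset $\lambda(X)\subseteq {\cal L}(A)$, together with Remark \ref{comuttop} and surjectivity of $\lambda$, I obtain some $e\in A$ with $\lambda(e)\in B({\cal L}(A))$ and $\lambda(X^{\top})=\lambda(X)^{\top}=<\lambda(e)>$. At this point the argument becomes \emph{verbatim} the second half of Proposition \ref{stoneiff}: from $\lambda(e)\in B({\cal L}(A))$ and surjectivity I extract $f\in A$ with $\lambda(e)\wedge\lambda(f)=0$ and $\lambda(e)\vee\lambda(f)=1$, then use condition 1), Remark \ref{unu}, property c), condition 2) and Remark \ref{`calcul`}(ii) to fix $n\in{\rm I\!N}^{*}$ with $e^{n}\wedge f^{n}=e^{n}\odot f^{n}=0$ and $e^{n}\vee f^{n}=1$, so that $e^{n}\in B(A)$. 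Finally property c), Remark \ref{lambdagen}, Proposition \ref{topfiltru} and Lemma \ref{lambdaegal} give $X^{\top}=<e^{n}>$, proving $A$ is strongly Stone.

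I do not expect any genuine obstacle here, precisely because the subset case was anticipated: Remark \ref{comuttop} is stated for arbitrary $X$, and the delicate part — transferring a complemented element back across $\lambda$ and correcting it to $e^{n}$ using the idempotency property c) — is identical to the single-element case and does not interact with the cardinality of $X$ at all. The only thing worth flagging is that this very uniformity means the result can be phrased as an immediate adaptation of Proposition \ref{stoneiff}, so in practice I would write the proof by pointing to Proposition \ref{stoneiff} and indicating that every step goes through with $X$ in place of $a$ and $\lambda(X)$ in place of $\lambda(a)$, rather than reproducing the full computation.
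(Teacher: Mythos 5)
Your proof is correct and is exactly what the paper intends: its own proof of this proposition consists of the single line ``Similar to the proof of Proposition \ref{stoneiff}'', and your detailed adaptation (replacing elements by subsets, relying on Remark \ref{comuttop} being stated for arbitrary subsets, and reusing the $e^{n}\in B(A)$ argument verbatim) is precisely that similarity spelled out.
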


\begin{proof}
Similar to the proof of Proposition \ref{stoneiff}.\end{proof}

\begin{proposition}
Let $A$ be a residuated lattice. Then ${\rm Co-Ann}(A)$ and ${\rm Co-Ann}({\cal{L}}(A))$ are isomorphic Boolean algebras.
\end{proposition}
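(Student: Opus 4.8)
The plan is to show that the bounded lattice isomorphism $\Phi : {\cal{F}}(A)\rightarrow {\cal{F}}({\cal{L}}(A))$, $F\mapsto \lambda (F)$, furnished by Proposition \ref{`izomf`}, restricts to a Boolean algebra isomorphism from $({\rm Co-Ann}(A),\vee ^{\top },\cap ,^{\top },\{1\},A)$ onto $({\rm Co-Ann}({\cal{L}}(A)),\vee ^{\top },\cap ,^{\top },\{1\},{\cal{L}}(A))$. The device that powers every step is Remark \ref{comuttop}, which asserts that $\lambda $ commutes with the co-annihilator, i.e. $\lambda (X^{\top })=\lambda (X)^{\top }$ for all $X\subseteq A$.

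First I would check that $\Phi $ carries ${\rm Co-Ann}(A)$ into ${\rm Co-Ann}({\cal{L}}(A))$: for $X\subseteq A$, Remark \ref{comuttop} gives $\Phi (X^{\top })=\lambda (X^{\top })=\lambda (X)^{\top }$, which is a co-annihilator in ${\cal{L}}(A)$. For surjectivity onto co-annihilators, given $Y\subseteq {\cal{L}}(A)$ I would use the surjectivity of $\lambda $ (condition 4)) to pick $X\subseteq A$ with $\lambda (X)=Y$, whence $Y^{\top }=\lambda (X)^{\top }=\lambda (X^{\top })=\Phi (X^{\top })$ by Remark \ref{comuttop}. Injectivity is inherited, since $\Phi $ is already injective on all of ${\cal{F}}(A)$ (Proposition \ref{`izomf`}, or directly Lemma \ref{lambdaegal}). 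Thus the restriction is a bijection ${\rm Co-Ann}(A)\rightarrow {\rm Co-Ann}({\cal{L}}(A))$.

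It then remains to verify that this bijection respects the Boolean operations. Preservation of the meet $\cap $ is immediate, because $\cap $ is also the meet of the ambient lattice ${\cal{F}}(A)$ and $\Phi $ is a lattice isomorphism there. Preservation of the complement ${}^{\top }$ is one further application of Remark \ref{comuttop}: $\Phi (F^{\top })=\lambda (F^{\top })=\lambda (F)^{\top }=\Phi (F)^{\top }$. The bounds are handled by $\Phi (\{1\})=\{1\}$ and $\Phi (A)=\lambda (A)={\cal{L}}(A)$. The one computation that genuinely combines the tools is preservation of the join $\vee ^{\top }$: expanding $F\vee ^{\top }G=(F^{\top }\cap G^{\top })^{\top }$, applying Remark \ref{comuttop} to the outer ${}^{\top }$, then using that $\Phi $ preserves $\cap $ (recalling that $F^{\top },G^{\top }$ are filters by Proposition \ref{topfiltru}), and applying Remark \ref{comuttop} once more to each factor, I would obtain $\Phi (F\vee ^{\top }G)=(\lambda (F)^{\top }\cap \lambda (G)^{\top })^{\top }=\Phi (F)\vee ^{\top }\Phi (G)$.

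Since the substance of the argument is already isolated in Remark \ref{comuttop} and Proposition \ref{`izomf`}, I do not expect a serious obstacle. The one place demanding care is the join $\vee ^{\top }$, where one must remember that the Boolean join differs from the filter-join on ${\cal{F}}(A)$ and so cannot simply be inherited from the lattice isomorphism but must be checked by hand as above; a minor further point is to ensure, in the surjectivity step, that the chosen $X\subseteq A$ indeed satisfies $\lambda (X)=Y$, which is immediate from condition 4).
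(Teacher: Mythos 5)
Your proposal is correct and follows essentially the same route as the paper: the map $F\mapsto \lambda (F)$ restricted to co-annihilator filters, with Remark \ref{comuttop} and Proposition \ref{`izomf`} doing all the work for injectivity and preservation of the Boolean operations, and surjectivity checked exactly as you do via condition 4). The paper merely compresses the operation-by-operation verification (including the $\vee ^{\top }$ computation you spell out) into a single citation of those two results.
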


\begin{proof}
Let $({\cal{L}}(A),\lambda )$ be the reticulation of $A$ and $\mu :{\rm Co-Ann}(A)\rightarrow {\rm Co-Ann}({\cal{L}}(A))$, for all $F\in {\rm Co-Ann}(A)$, $\mu (F)=\lambda (F)$. Proposition \ref{`izomf`} and Remark \ref{comuttop} show that $\mu $ is an injective morphism of Boolean algebras. For all $F\in {\rm Co-Ann}({\cal{L}}(A))$, there exists $X\subseteq {\cal{L}}(A)$ such that $F=X^{\top }$. By the surjectivity of $\lambda $, there exists $Y\subseteq A$ such that $\lambda (Y)=X$. $Y^{\top }\in {\rm Co-Ann}(A)$ and, by Remark \ref{comuttop}, $\mu (Y^{\top })=\lambda (Y^{\top })=\lambda (Y)^{\top }=X^{\top }=F$. So $\mu $ is also surjective, hence it is a Boolean isomorphism.\end{proof}

\begin{theorem}
Let $A$ be a residuated lattice and $m$ an infinite cardinal. Then the following are equivalent:

\noindent (i) for each subset $X$ of $A$ with $|X|\leq m$, there exists an element $e\in B(A)$ such that $X^{\top }=<e>$;

\noindent (ii) $A$ is a Stone residuated lattice and $B(A)$ is an $m$-complete Boolean algebra;

\noindent (iii) $A^{\top \top }=\{(a^{\top })^{\top }|a\in A\}$ is an $m$-complete Boolean sublattice of ${\cal{F}}(A)$;

\noindent (iv) for all $a,b\in A$, $(a\wedge b)^{\top }=a^{\top }\vee b^{\top }$ and, for each subset $X$ of $A$ with $|X|\leq m$, there exists an element $x\in A$ such that $X^{\top \top}=x^{\top }$;

\noindent (v) for each subset $X$ of $A$ with $|X|\leq m$, $X^{\top }\vee X^{\top \top }=A$.
\label{caractlrstone}

\end{theorem}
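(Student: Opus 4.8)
The plan is to derive this theorem from its bounded distributive lattice counterpart, Theorem \ref{caractlstone}, by proving that, for each $k\in\{{\rm i},{\rm ii},{\rm iii},{\rm iv},{\rm v}\}$, condition $(k)$ holds for $A$ if and only if the corresponding condition $(k)$ holds for the reticulation ${\cal{L}}(A)$. Once these five separate equivalences are in hand, the full chain ${\rm (i)}\Leftrightarrow{\rm (ii)}\Leftrightarrow{\rm (iii)}\Leftrightarrow{\rm (iv)}\Leftrightarrow{\rm (v)}$ for $A$ follows by transporting each link across $\lambda$ to ${\cal{L}}(A)$, where Theorem \ref{caractlstone} applies, and transporting the result back. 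Thus the entire proof is a transfer argument, and the real content lies in the five ``iff'' statements, none of which should be hard given the machinery already assembled.

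The transfer rests on the dictionary between $A$ and ${\cal{L}}(A)$ built in the previous sections. Remark \ref{comuttop} shows that $\lambda$ intertwines the co-annihilator operation, $\lambda(X^{\top})=\lambda(X)^{\top}$, and applying it twice gives $\lambda(X^{\top\top})=\lambda(X)^{\top\top}$. Proposition \ref{`izomf`} provides a bounded lattice isomorphism ${\cal{F}}(A)\to{\cal{F}}({\cal{L}}(A))$, $F\mapsto\lambda(F)$, which therefore respects the filter operations $\vee$ and $\cap$ and sends the top filter $A$ to ${\cal{L}}(A)$; its injectivity is Lemma \ref{lambdaegal}. Proposition \ref{izomb} gives a Boolean isomorphism $\lambda\mid_{B(A)}\colon B(A)\to B({\cal{L}}(A))$, through which both membership in the Boolean center and $m$-completeness of the Boolean center transfer. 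Remark \ref{lambdagen} converts principal filters into principal filters, property b) converts meets, and, since $\lambda$ is surjective, $|\lambda(X)|\leq|X|$ while every subset of ${\cal{L}}(A)$ of cardinality at most $m$ lifts to such a subset of $A$, so the bounds $|X|\leq m$ pass freely in both directions.

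With this dictionary each condition transfers by a short computation. For $(i)$, applying $\lambda$ to $X^{\top}=<e>$ and using Remarks \ref{comuttop} and \ref{lambdagen} yields $\lambda(X)^{\top}=<\lambda(e)>$; conversely a witness $e'\in B({\cal{L}}(A))$ is pulled back to some $e\in B(A)$ via Proposition \ref{izomb}, and Lemma \ref{lambdaegal} upgrades $\lambda(X^{\top})=\lambda(<e>)$ to $X^{\top}=<e>$. Condition $(ii)$ is exactly Proposition \ref{stoneiff} together with preservation of $m$-completeness under the Boolean isomorphism of Proposition \ref{izomb}. Condition $(iii)$ holds because the isomorphism of Proposition \ref{`izomf`} carries $A^{\top\top}$ onto ${\cal{L}}(A)^{\top\top}$ (using $\lambda((a^{\top})^{\top})=\lambda(a)^{\top\top}$ and surjectivity of $\lambda$), and a bounded lattice isomorphism preserves the property of being an $m$-complete Boolean sublattice. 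Condition $(iv)$ splits into the identity $(a\wedge b)^{\top}=a^{\top}\vee b^{\top}$, transferred via property b) and Proposition \ref{`izomf`}, and the existence of $x$ with $X^{\top\top}=x^{\top}$, handled as in $(i)$. Finally $(v)$ transfers by applying $\lambda$ and reading off that $X^{\top}\vee X^{\top\top}=A$ is equivalent to $\lambda(X)^{\top}\vee\lambda(X)^{\top\top}={\cal{L}}(A)$.

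I expect the only genuinely delicate point to be the handling of the existential witnesses living in the Boolean center, in conditions $(i)$, $(ii)$ and $(iv)$: $\lambda(e)\in B({\cal{L}}(A))$ does not by itself force $e\in B(A)$, which is precisely the difficulty met in the proof of Proposition \ref{stoneiff}, where it was overcome by passing to a suitable power $e^{n}$. Here, however, the surjectivity half of Proposition \ref{izomb} furnishes a genuine Boolean-center preimage directly, so no power trick is needed; one must simply be careful to pull each witness back through $\lambda\mid_{B(A)}$ rather than through $\lambda$. The remaining work is the routine bookkeeping of cardinalities and the verification that the filter isomorphism of Proposition \ref{`izomf`} respects the Boolean and $m$-complete structure on $A^{\top\top}$.
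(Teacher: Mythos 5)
Your proposal follows exactly the paper's strategy: reduce the theorem to Theorem \ref{caractlstone} by showing that each of conditions (i)--(v) for $A$ is equivalent to the corresponding condition for ${\cal{L}}(A)$, using Remark \ref{comuttop} (compatibility of $\lambda$ with $^{\top }$), Remark \ref{lambdagen}, Lemma \ref{lambdaegal}, the filter isomorphism of Proposition \ref{`izomf`}, the Boolean-center isomorphism of Proposition \ref{izomb}, and Proposition \ref{stoneiff}, together with the cardinality bookkeeping from surjectivity of $\lambda $. The single point where you diverge --- recovering the Boolean-center witness in the converse directions of (i) and (iv) by pulling $f\in B({\cal{L}}(A))$ back through the surjective isomorphism of Proposition \ref{izomb}, instead of lifting through $\lambda $ to an arbitrary $e\in A$ and then passing to a power $e^{n}\in B(A)$ as the paper does (repeating the argument of Proposition \ref{stoneiff}) --- is a legitimate and slightly cleaner alternative, so no gap remains.
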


\begin{proof}
Let $({\cal{L}}(A),\lambda )$ be the reticulation of $A$. By Theorem \ref{caractlstone}, it is sufficient to prove that condition (i) for $A$ is equivalent with condition (i) for ${\cal{L}}(A)$ and the same is valid for conditions (ii)-(v).

So let us denote the following conditions:

\noindent (i-$A$) for each subset $X$ of $A$ with $|X|\leq m$, there exists an element $e\in B(A)$ such that $X^{\top }=<e>$,

\noindent (i-${\cal{L}}(A)$) for each subset $X$ of ${\cal{L}}(A)$ with $|X|\leq m$, there exists an element $e\in B({\cal{L}}(A))$ such that $X^{\top }=<e>$,

\noindent and let us prove that (i-$A$) is equivalent to (i-${\cal{L}}(A)$).

First, let us assume that (i-$A$) is satisfied and let $X\subseteq {\cal{L}}(A)$ with $|X|\leq m$. The fact that $\lambda $ is surjective implies that there exists $Y\subseteq A$ with $|Y|=|X|\leq m$ and $\lambda (Y)=X$. By (i-$A$), there exists $e\in B(A)$ such that $Y^{\top }=<e>$. Obviously, $\lambda (e)\in B({\cal{L}}(A))$ (see also Proposition \ref{izomb}). By Remarks \ref{comuttop} and \ref{lambdagen}, $X^{\top }=\lambda (Y)^{\top }=\lambda (Y^{\top })=\lambda (<e>)=<\lambda (e)>$.

Now let us assume that (i-${\cal{L}}(A)$) is satisfied and let $X\subseteq A$ with $|X|\leq m$. Then $|\lambda (X)|\leq |X|\leq m$, so there exists $f\in B({\cal{L}}(A))$ such that $\lambda (X)^{\top }=<f>$. $\lambda $ is surjective, so there exists $e\in A$ such that $\lambda (e)=f$. As in the proof of Proposition \ref{stoneiff}, it follows that there exists $n\in {\rm I\! N}^{*}$ such that $e^{n}\in B(A)$. Using Remarks \ref{comuttop} and \ref{lambdagen} and property c), we get $\lambda (X^{\top })=\lambda (X)^{\top }=<\lambda (e)>=<\lambda (e^{n})>=\lambda (<e^{n}>)$, which, by Proposition \ref{topfiltru} and Lemma \ref{lambdaegal}, implies $X^{\top }=<e^{n}>$.

We denote:

\noindent (ii-$A$) $A$ is a Stone residuated lattice and $B(A)$ is an $m$-complete Boolean algebra;

\noindent (ii-${\cal{L}}(A)$) ${\cal{L}}(A)$ is a Stone lattice and $B({\cal{L}}(A))$ is an $m$-complete Boolean algebra.

Propositions \ref{stoneiff} and \ref{izomb} ensure us that (ii-$A$) and (ii-${\cal{L}}(A)$) are equivalent.

Let us denote:

\noindent (iii-$A$) $A^{\top \top }=\{(a^{\top })^{\top }|a\in A\}$ is an $m$-complete Boolean sublattice of ${\cal{F}}(A)$;

\noindent (iii-${\cal{L}}(A)$) ${\cal{L}}(A)^{\top \top }=\{(l^{\top })^{\top }|l\in {\cal{L}}(A)\}$ is an $m$-complete Boolean sublattice of ${\cal{F}}({\cal{L}}(A))$.

Let $\psi :A^{\top \top }\rightarrow {\cal{L}}(A)^{\top \top }$, for all $a\in A$, $\psi (a^{\top \top })=\lambda (a)^{\top \top }=\lambda (a^{\top \top})$, where the last equality was obtained from Remark \ref{comuttop}. By Propositions \ref{topfiltru} and \ref{`izomf`}, $\psi $ is an injective morphism of bounded lattices. The fact that $\lambda $ is surjective implies that $\psi $ is surjective. Hence $\psi $ is a bounded lattice isomorphism.

Let $L:{\cal{F}}(A)\rightarrow {\cal{F}}({\cal{L}}(A))$ be the bounded lattice isomorphism from Proposition \ref{`izomf`}: for all $F\in {\cal{F}}(A)$, $L(F)=\lambda (F)$. If there exists an injective morphism of bounded lattices $f:A^{\top \top }\rightarrow {\cal{F}}(A)$, then the function $g:{\cal{L}}(A)^{\top \top }\rightarrow {\cal{F}}({\cal{L}}(A))$, defined by $g=L\circ f\circ \psi ^{-1}$, is an injective morphism of bounded lattices. If there exists an injective morphism of bounded lattices $g:{\cal{L}}(A)^{\top \top }\rightarrow {\cal{F}}({\cal{L}}(A))$, then the function $f:A^{\top \top }\rightarrow {\cal{F}}(A)$, defined by $f=L^{-1}\circ g\circ \psi $, is an injective morphism of bounded lattices.

\begin{center}
\begin{picture}(120,60)(0,0)
\put(15,0){${\cal{F}}(A)$}
\put(43,4){\vector(1,0){30}}
\put(55,6){$L$}
\put(75,0){${\cal{F}}({\cal{L}}(A))$}
\put(15,40){$A^{\top \top}$}
\put(19,38){\vector(0,-1){28}}
\put(11,21){$f$}
\put(40,43){\vector(1,0){33}}
\put(55,45){$\psi $}

\put(75,40){${\cal{L}}(A)^{\top \top}$}
\put(90,38){\vector(0,-1){28}}
\put(93,21){$g$}
\end{picture}
\end{center}

The above show the equivalence between (iii-$A$) and (iii-${\cal{L}}(A)$).

We denote:

\noindent (iv-$A$) for all $a,b\in A$, $(a\wedge b)^{\top }=a^{\top }\vee b^{\top }$ and, for each subset $X$ of $A$ with $|X|\leq m$, there exists an element $x\in A$ such that $X^{\top \top}=x^{\top }$;

\noindent (iv-${\cal{L}}(A)$) for all $l,p\in {\cal{L}}(A)$, $(l\wedge p)^{\top }=l^{\top }\vee p^{\top }$ and, for each subset $X$ of ${\cal{L}}(A)$ with $|X|\leq m$, there exists an element $x\in {\cal{L}}(A)$ such that $X^{\top \top}=x^{\top }$.

Let us assume that (iv-$A$) is satisfied and let $a,b\in A$. We will use the surjectivity of $\lambda $. By property b), Remark \ref{comuttop} and condition 2), $(\lambda(a)\wedge \lambda (b))^{\top }=\lambda (a\wedge b)^{\top }=\lambda ((a\wedge b)^{\top })=\lambda (a^{\top }\vee b^{\top })=\lambda (a^{\top })\vee \lambda (b^{\top })=\lambda (a)^{\top }\vee \lambda (b)^{\top }$. Let $X\subseteq {\cal{L}}(A)$ with $|X|\leq m$. By the surjectivity of $\lambda $, there exists $Y\subseteq A$ with $\lambda (Y)=X$ and $|Y|=|X|\leq m$. This implies that there exists $y\in A$ such that $Y^{\top \top}=y^{\top }$, which in turn, by Remark \ref{comuttop}, implies that $X^{\top \top}=\lambda (Y)^{\top \top }=\lambda (Y^{\top \top})=\lambda (y^{\top })=\lambda (y)^{\top }$.

Now let us assume that (iv-${\cal{L}}(A)$) is satisfied and let $a,b\in A$. We have: $(\lambda(a)\wedge \lambda (b))^{\top }=\lambda (a)^{\top }\vee \lambda (b)^{\top }$, which, by computations similar to the ones above, is equivalent to: $\lambda ((a\wedge b)^{\top })=\lambda (a^{\top }\vee b^{\top })$. This, by Proposition \ref{topfiltru} and Lemma \ref{lambdaegal}, implies that $(a\wedge b)^{\top }=a^{\top }\vee b^{\top }$. Let $Y\subseteq A$ with $|Y|\leq m$. Then $|\lambda (Y)|\leq |Y|\leq m$, so, by the surjectivity of $\lambda $, there exists $y\in A$ such that $\lambda (Y)^{\top \top }=\lambda (y)^{\top }$. By computations similar to the ones above, this is equivalent to $\lambda (Y^{\top \top })=\lambda (y^{\top })$, which, by Proposition \ref{topfiltru} and Lemma \ref{lambdaegal}, is equivalent to $Y^{\top \top }=y^{\top }$.

We denote:

\noindent (v-$A$) for each subset $X$ of $A$ with $|X|\leq m$, $X^{\top }\vee X^{\top \top }=A$;

\noindent (v-${\cal{L}}(A)$) for each subset $X$ of ${\cal{L}}(A)$ with $|X|\leq m$, $X^{\top }\vee X^{\top \top }={\cal{L}}(A)$.

Let us assume that (v-$A$) is satisfied. Let $X\subseteq {\cal{L}}(A)$ such that $|X|\leq m$. The surjectivity of $\lambda $ implies that there exists $Y\subseteq A$ with $\lambda (Y)=X$ and $|Y|=|X|\leq m$. Therefore $Y^{\top }\vee Y^{\top \top }=A$. By Remark \ref{comuttop}, Proposition \ref{`izomf`} and the surjectivity of $\lambda $ (which is actually implied by Proposition \ref{`izomf`}), this implies that $X^{\top }\vee X^{\top \top }=\lambda (Y)^{\top }\vee \lambda (Y)^{\top \top }=\lambda (Y^{\top })\vee \lambda (Y^{\top \top })=\lambda (Y^{\top }\vee Y^{\top \top })=\lambda (A)={\cal{L}}(A)$.

Conversely, let us assume that (v-${\cal{L}}(A)$) is satisfied. Let $Y\subseteq A$ such that $|Y|\leq m$. Then $|\lambda (Y)|\leq |Y|\leq m$, so $\lambda (Y)^{\top }\vee \lambda (Y)^{\top \top }={\cal{L}}(A)$. By computations similar to the ones above, this is equivalent to $\lambda (Y^{\top }\vee Y^{\top \top })=\lambda (A)$, which, by Lemma \ref{lambdaegal}, is equivalent to $Y^{\top }\vee Y^{\top \top }=A$.\end{proof}

A residuated lattice will be called an {\em $m$-Stone residuated lattice} iff the conditions of Theorem \ref{caractlrstone} hold for it.

\begin{proposition}
$A$ is an $m$-Stone residuated lattice iff ${\cal{L}}(A)$ is an $m$-Stone lattice.
\label{mstoneiff}
\end{proposition}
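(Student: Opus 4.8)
The plan is to observe that this proposition is, in effect, a repackaging of what was already established inside the proof of Theorem \ref{caractlrstone}, translated into the terminology of $m$-Stone structures. By definition, $A$ is an $m$-Stone residuated lattice iff the (mutually equivalent) conditions of Theorem \ref{caractlrstone} hold for $A$, so in particular iff condition (i-$A$) holds. Dually, by the definition of $m$-Stone lattice via Theorem \ref{caractlstone}, the bounded distributive lattice ${\cal{L}}(A)$ is an $m$-Stone lattice iff the (mutually equivalent) conditions of Theorem \ref{caractlstone} hold for it, so in particular iff condition (i-${\cal{L}}(A)$) holds. Thus the whole proposition reduces to the single equivalence between (i-$A$) and (i-${\cal{L}}(A)$).

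The key step I would invoke is that, in proving Theorem \ref{caractlrstone}, we did not merely verify that conditions (i)--(v) are equivalent for $A$: we proved the stronger fact that each condition for $A$ is equivalent to the corresponding condition for ${\cal{L}}(A)$, and then appealed to Theorem \ref{caractlstone} to close the loop on the ${\cal{L}}(A)$ side. In particular, the equivalence of (i-$A$) and (i-${\cal{L}}(A)$) was established there directly, using the surjectivity of $\lambda $ together with Remarks \ref{comuttop} and \ref{lambdagen}, and the lifting argument borrowed from the proof of Proposition \ref{stoneiff} (that an element $e\in A$ with $\lambda (e)\in B({\cal{L}}(A))$ yields, after raising to a suitable power $n$, an element $e^{n}\in B(A)$). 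I would simply cite that portion of the earlier proof rather than reproduce the calculation.

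Chaining the equivalences then gives the statement immediately: $A$ is an $m$-Stone residuated lattice iff (i-$A$) holds iff (i-${\cal{L}}(A)$) holds iff ${\cal{L}}(A)$ is an $m$-Stone lattice. I do not expect any genuine obstacle here; all the substantive work — the surjectivity arguments, the commutation of $\lambda $ with co-annihilators and with filter generation, and the Boolean-element lifting — has already been carried out in Theorem \ref{caractlrstone}, and this proposition only needs to record the consequence in the vocabulary introduced just after that theorem.
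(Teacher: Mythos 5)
Your proposal is correct and is exactly the paper's argument: the paper proves this proposition simply by citing the proof of Theorem \ref{caractlrstone}, where each condition (i)--(v) for $A$ was shown equivalent to the corresponding condition for ${\cal{L}}(A)$. Your more explicit chaining through condition (i-$A$) $\Leftrightarrow$ (i-${\cal{L}}(A)$) just spells out what that citation leaves implicit.
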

\begin{proof}
By the proof of Theorem \ref{caractlrstone}.\end{proof}

The following two remarks show that Stone residuated lattices do not have a characterization like the one in \cite[Theorem 8.7.1, page 164]{bal} for Stone pseudocomplemented distributive lattices.

\begin{remark}
There exist Stone residuated lattices $A$ with elements $a\in A$ that do not satisfy the identity $\neg \, a\vee \neg \, \neg \, a=1$.
\label{stonedar}

\end{remark}
\begin{proof}
Let us consider the following residuated lattice: $A=\{0,a,b,c,1\}$, with the structure described below. This is an example of residuated lattice from \cite[Section 11.1]{ior1}, which can also be found in \cite{ior}.

\begin{center}
\begin{picture}(70,95)(0,0)
\put(30,10){\line(-1,1){20}}
\put(30,10){\line(1,1){20}}
\put(30,50){\line(-1,-1){20}}
\put(30,50){\line(1,-1){20}}
\put(30,50){\line(0,1){20}}
\put(30,10){\circle*{3}}
\put(10,30){\circle*{3}}
\put(50,30){\circle*{3}}
\put(30,50){\circle*{3}}

\put(30,70){\circle*{3}}
\put(28,0){$0$}
\put(1,27){$a$}
\put(54,27){$b$}
\put(34,48){$c$}
\put(28,75){$1$}
\end{picture}
\end{center}

\begin{center}
\begin{tabular}{c|ccccc}
$\rightarrow $ & $0$ & $a$ & $b$ & $c$ & $1$ \\ \hline
$0$ & $1$ & $1$ & $1$ & $1$ & $1$ \\
$a$ & $b$ & $1$ & $b$ & $1$ & $1$ \\
$b$ & $a$ & $a$ & $1$ & $1$ & $1$ \\
$c$ & $0$ & $a$ & $b$ & $1$ & $1$ \\
$1$ & $0$ & $a$ & $b$ & $c$ & $1$
\end{tabular}
\end{center}

\noindent and $\odot =\wedge $.

$B(A)=\{0,1\}$, $<0>=A$, $<1>=\{1\}$, $0^{\top }=a^{\top }=b^{\top }=c^{\top }=\{1\}$, $1^{\top }=A$, therefore $A$ is a Stone residuated lattice. But $\neg \, a=b$, $\neg \, \neg \, a=\neg \, b=a$, so $\neg \, a\vee \neg \, \neg \, a=b\vee a=c\neq 1$.\end{proof}

Notice that $A$ from the proof above is strongly Stone.

\begin{remark}
There exist residuated lattices $A$ that satisfy the identity $\neg \, a\vee \neg \, \neg \, a=1$ for all $a\in A$ and that are not Stone.
\end{remark}
\begin{proof}

Let $A=\{0,n,a,b,i,f,g,h,j,c,d,1\}$, with the residuated lattice structure presented below. This is an example of residuated lattice from \cite[Section 15.2.1]{ior2}, which can also be found in \cite{ior}.

\begin{center}
\begin{picture}(70,210)(0,0)
\put(30,10){\line(0,1){20}}

\put(30,30){\line(1,1){20}}
\put(30,30){\line(-1,1){20}}
\put(30,70){\line(1,-1){20}}
\put(30,70){\line(-1,-1){20}}
\put(30,70){\line(0,1){80}}
\put(30,150){\line(1,1){20}}
\put(30,150){\line(-1,1){20}}
\put(30,190){\line(1,-1){20}}
\put(30,190){\line(-1,-1){20}}
\put(30,10){\circle*{3}}
\put(10,50){\circle*{3}}

\put(30,30){\circle*{3}}
\put(50,50){\circle*{3}}
\put(30,70){\circle*{3}}
\put(30,90){\circle*{3}}
\put(30,110){\circle*{3}}
\put(30,130){\circle*{3}}

\put(30,150){\circle*{3}}
\put(30,190){\circle*{3}}
\put(10,170){\circle*{3}}
\put(50,170){\circle*{3}}
\put(28,0){$0$}
\put(34,27){$n$}
\put(34,67){$i$}
\put(34,87){$f$}
\put(34,107){$g$}
\put(34,127){$h$}
\put(34,147){$j$}
\put(1,47){$a$}
\put(54,47){$b$}

\put(1,167){$c$}
\put(54,167){$d$}
\put(28,195){$1$}
\end{picture}
\end{center}

\begin{center}
\begin{tabular}{c|cccccccccccc}
$\rightarrow $ & $0$ & $n$ & $a$ & $b$ & $i$ & $f$ & $g$ & $h$ & $j$ & $c$ & $d$ & $1$ \\ \hline
$0$ & $1$ & $1$ & $1$ & $1$ & $1$ & $1$ & $1$ & $1$ & $1$ & $1$ & $1$ & $1$ \\
$n$ & $0$ & $1$ & $1$ & $1$ & $1$ & $1$ & $1$ & $1$ & $1$ & $1$ & $1$ & $1$ \\
$a$ & $0$ & $d$ & $1$ & $d$ & $1$ & $1$ & $1$ & $1$ & $1$ & $1$ & $1$ & $1$ \\
$b$ & $0$ & $c$ & $c$ & $1$ & $1$ & $1$ & $1$ & $1$ & $1$ & $1$ & $1$ & $1$ \\
$i$ & $0$ & $j$ & $c$ & $d$ & $1$ & $1$ & $1$ & $1$ & $1$ & $1$ & $1$ & $1$ \\
$f$ & $0$ & $h$ & $h$ & $h$ & $h$ & $1$ & $1$ & $1$ & $1$ & $1$ & $1$ & $1$ \\
$g$ & $0$ & $g$ & $g$ & $g$ & $g$ & $h$ & $1$ & $1$ & $1$ & $1$ & $1$ & $1$ \\
$h$ & $0$ & $f$ & $f$ & $f$ & $f$ & $h$ & $h$ & $1$ & $1$ & $1$ & $1$ & $1$ \\
$j$ & $0$ & $i$ & $i$ & $i$ & $i$ & $f$ & $g$ & $h$ & $1$ & $1$ & $1$ & $1$ \\
$c$ & $0$ & $b$ & $i$ & $b$ & $i$ & $f$ & $g$ & $h$ & $d$ & $1$ & $d$ & $1$ \\

$d$ & $0$ & $a$ & $a$ & $i$ & $i$ & $f$ & $g$ & $h$ & $c$ & $c$ & $1$ & $1$ \\
$1$ & $0$ & $n$ & $a$ & $b$ & $i$ & $f$ & $g$ & $h$ & $j$ & $c$ & $d$ & $1$ \end{tabular}
\end{center}

\begin{center}
\begin{tabular}{c|cccccccccccc}

$\odot $ & $0$ & $n$ & $a$ & $b$ & $i$ & $f$ & $g$ & $h$ & $j$ & $c$ & $d$ & $1$ \\ \hline
$0$ & $0$ & $0$ & $0$ & $0$ & $0$ & $0$ & $0$ & $0$ & $0$ & $0$ & $0$ & $0$ \\
$n$ & $0$ & $n$ & $n$ & $n$ & $n$ & $n$ & $n$ & $n$ & $n$ & $n$ & $n$ & $n$ \\
$a$ & $0$ & $n$ & $n$ & $n$ & $n$ & $n$ & $n$ & $n$ & $n$ & $a$ & $n$ & $a$ \\
$b$ & $0$ & $n$ & $n$ & $n$ & $n$ & $n$ & $n$ & $n$ & $n$ & $n$ & $b$ & $b$ \\
$i$ & $0$ & $n$ & $n$ & $n$ & $n$ & $n$ & $n$ & $n$ & $n$ & $a$ & $b$ & $i$ \\
$f$ & $0$ & $n$ & $n$ & $n$ & $n$ & $n$ & $n$ & $n$ & $f$ & $f$ & $f$ & $f$ \\
$g$ & $0$ & $n$ & $n$ & $n$ & $n$ & $n$ & $n$ & $f$ & $g$ & $g$ & $g$ & $g$ \\
$h$ & $0$ & $n$ & $n$ & $n$ & $n$ & $n$ & $f$ & $f$ & $h$ & $h$ & $h$ & $h$ \\
$j$ & $0$ & $n$ & $n$ & $n$ & $n$ & $f$ & $g$ & $h$ & $j$ & $j$ & $j$ & $j$ \\
$c$ & $0$ & $n$ & $a$ & $n$ & $a$ & $f$ & $g$ & $h$ & $j$ & $c$ & $j$ & $c$ \\
$d$ & $0$ & $n$ & $n$ & $b$ & $b$ & $f$ & $g$ & $h$ & $j$ & $j$ & $d$ & $d$ \\
$1$ & $0$ & $n$ & $a$ & $b$ & $i$ & $f$ & $g$ & $h$ & $j$ & $c$ & $d$ & $1$ \end{tabular}
\end{center}

$A$ satisfies the identity in the enunciation. $B(A)=\{0,1\}$, $<0>=A$, $<1>=\{1\}$, but $c^{\top }=\{d,1\}$, hence $A$ is not Stone.\end{proof}

\begin{remark}
There exist residuated lattices $A$ that do not satisfy the identity $\neg \, a\vee \neg \, \neg \, a=1$ for all $a\in A$, but whose reticulation ${\cal{L}}(A)$ is a pseudocomplemented lattice and satisfies this identity: $l^{*}\vee l^{**}=1$ for all $l\in {\cal{L}}(A)$.
\label{nucdefstone}
\end{remark}
\begin{proof}
Let us consider the example from the proof of Remark \ref{stonedar}. In this residuated lattice, that we denote by $A$, $\odot =\wedge $, hence, as shown by \cite[Proposition 3.1]{eu2}, $A$ and ${\cal{L}}(A)$ are isomorphic bounded lattices. One can see that, therefore, ${\cal{L}}(A)$ is pseudocomplemented. As we have seen in Remark \ref{stonedar}, $A$ is Stone, so ${\cal{L}}(A)$ is also Stone, which can be seen from Proposition \ref{stoneiff} or from the fact that these two algebras are isomorphic as bounded lattices and the fact that $\odot =\wedge $ in $A$, which ensures us that their principal filters coincide. Since ${\cal{L}}(A)$ is Stone, it follows by \cite[Theorem 8.7.1, page 164]{bal} that it satisfies the identity in the enunciation. But, as shown in Remark \ref{stonedar}, $A$ does not satisfy this identity.\end{proof}

The remark above shows that the alternate definition of Stone algebras, from \cite{rcig}, is not transferrable through the reticulation, which is the reason why we have chosen our definition over it.

\section{Acknowledgements}

\hspace*{11pt} I thank Professor George Georgescu for offering me this research theme and for numerous suggestions concerning the tackling of these mathematical problems.


\begin{thebibliography}{99}

\bibitem{ash} C. J. Ash, On Countable $n$-valued Post Algebras, {\em Algebra Universalis} 2 (1972), 341-345.

\bibitem{bal} R. Balbes, P. Dwinger, {\em Distributive Lattices}, University of Missouri Press, Columbia, Missouri (1974).

\bibitem{bel1} L. P. Belluce, Semisimple Algebras of Infinite Valued Logic and Bold Fuzzy Set Theory, {\em Can. J. Math.} 38, No. 6 (1986), 1356-1379.

\bibitem{bel2} L. P. Belluce, Spectral Spaces and Non-commutative Rings, {\em Comm. Algebra} 19 (1991), 1855-1865.

\bibitem{beloh} R. B\v{e}lohl${\rm \acute{a}}$vek, {\em Fuzzy Relational Systems. Foundations and Principles}, Kluwer Academic/Plenum Publishers, New York/Boston/Dordrecht/ London/Moscow (2002).

\bibitem{geo2} V. Boicescu, A. Filipoiu, G. Georgescu, S. Rudeanu, {\em Lukasiewicz-Moisil Algebras}, Annals of Discrete Mathematics, Elsevier Science Publishing Company, Inc., North-Holland-Amsterdam/New York/Oxford/Tokyo (1991).

\bibitem{bur} S. Burris, H. P. Sankappanavar, {\em A Course in Universal Algebra}, Springer-Verlag, New York (1981).

\bibitem{bus} D. Bu\c{s}neag, {\em Categories of Algebraic Logic}, Editura Academiei Rom\^{a}ne, Bucure\c{s}ti (2006).

\bibitem{rcig} R. Cignoli, Free Algebras in Varieties of Stonean Residuated Lattices, {\em Soft Computing} 12 (2008), 315-320.


\bibitem{dav} B. A. Davey, $m$-Stone Lattices, {\em Can. J. Math.} 24, No. 6 (1972), 1027-1032.

\bibitem{din3} A. Di Nola, G. Georgescu, Projective Limits of MV-spaces, {\em Order} 13 (1996), 391-398.

\bibitem{din2} A. Di Nola , G. Georgescu, A. Iorgulescu, Pseudo-BL Algebras: Part I, {\em Mult. Val. Logic} 8, No. 5-6 (2002), 673-714.

\bibitem{din} A. Di Nola , G. Georgescu, I. Leu\c{s}tean, {\em MV-algebras}, manuscript.

\bibitem{geo1} G. Georgescu, The Reticulation of a Quantale, {\em Rev. Roum. Math. Pures Appl.} 40, No. 7-8 (1995), 619-631.

\bibitem{haj} P. ${\rm H\acute{a}jek}$, {\em Metamathematics of Fuzzy Logic}, Trends in Logic-Studia Logica, Kluwer Academic Publishers, Dordrecht/Boston/London (1998).

\bibitem{ior1} A. Iorgulescu, Classes of BCK Algebras-Part III, Preprint Series of the Institute of Mathematics of the Romanian Academy, preprint No. 3 (2004), 1-37.

\bibitem{ior2} A. Iorgulescu, Classes of BCK Algebras-Part IV, Preprint Series of the Institute of Mathematics of the Romanian Academy, preprint No. 4 (2004), 1-25.

\bibitem{ior} A. Iorgulescu, {\em Algebras of Logic as BCK Algebras}, Editura ASE, Bucharest (2008). 

\bibitem{joh} P. T. Johnstone, {\em Stone Spaces}, Cambridge Studies in Advanced Mathematics 3, Cambridge University Press, Cambridge/London/New York/New Rochelle/Melbourne/Sydney (1982).

\bibitem{kow} T. Kowalski, H. Ono, {\em Residuated Lattices: An Algebraic Glimpse at Logics without Contraction}, manuscript. 

\bibitem{kuh} J. ${\rm K\ddot{u}hr}$, Boolean and Central Elements and Cantor-Bernstein Theorem in Bounded Pseudo-BCK-algebras, accepted for publication in {\em  Journal of Multiple-valued Logic and Soft Computing}.

\bibitem{leo1} L. Leu\c{s}tean, The Prime and Maximal Spectra and the Reticulation of BL-algebras, {\em Central European Journal of Mathematics} 1, No. 3 (2003), 382-397.

\bibitem{leo} L. Leu\c{s}tean, {\em Representations of Many-valued Algebras}, Ph. D. thesis (2003).

\bibitem{eu1} C. Mure\c{s}an, The Reticulation of a Residuated Lattice, {\em Bull. Math. Soc. Sci. Math. Roumanie} 51 (99), No. 1 (2008), 47-65.

\bibitem{eu2} C. Mure\c{s}an, Characterization of the Reticulation of a Residuated Lattice, to appear in {\em Journal of Multiple-valued Logic and Soft Computing}.


\bibitem{pic} D. Piciu, {\em Algebras of Fuzzy Logic}, Editura Universitaria Craiova, Craiova (2007).

\bibitem{pur} I. Purdea, G. Pic, {\em Tratat de algebr\u{a} modern\u{a}. Vol. I}, Editura Academiei, Bucharest (1977).

\bibitem{rud1} S. Rudeanu, {\em Curs de bazele informaticii. Logic\u{a} matematic\u{a}. Fascicula I-a. Elemente de algebr\u{a} universal\u{a}}, litographed lecture notes, University of Bucharest , Bucharest (1977).

\bibitem{sim} H. Simmons, Reticulated Rings, {\em J. Algebra} 66 (1980), 169-192.

\bibitem{tur} E. Turunen, {\em Mathematics behind Fuzzy Logic}, Advances in Soft Computing, Physica-Verlag, Heidelberg (1999).

\end{thebibliography}
\end{document}